\documentclass[a4paper,11pt]{article}

\hoffset=-1truecm
\textwidth145mm
\textheight230mm
\topmargin-2cm

\usepackage{amsmath}
\usepackage{amsfonts}
\usepackage{amssymb}
\usepackage{graphicx}
\usepackage{dsfont}
\usepackage{amsthm}
\usepackage{mathrsfs}
\usepackage{ bbm }
\usepackage[english]{babel}
\usepackage{esint}
\usepackage{fancyhdr}
\usepackage{color}
\usepackage{comment}
\usepackage{tikz}

\usetikzlibrary{calc}
\usetikzlibrary{graphs}
\usetikzlibrary{intersections}
\usetikzlibrary{patterns}
\usetikzlibrary{plotmarks}
\usetikzlibrary{shapes}
\pagestyle{fancy}
\fancyhf{}
\usepackage{fancyhdr}
\pagestyle{fancy}
\fancyhf{}
%\renewcommand{\chaptermark}[1]{\markboth{#1}{}}
%\fancyhead[LE,RO]{\thepage}
%\fancyhead[RE,LO]{\leftmark}

%----------------------------------------------------------
\theoremstyle{plain}
\newtheorem{teorema}{Theorem}
\newtheorem{lemma}{Lemma}
\newtheorem{corollario}{Corollary}
\theoremstyle{definition}
\newtheorem{definizione}{Definition}

\newtheorem{remark}{Remark}
\newcommand{\numberset}{\mathbb}

\newcommand{\R}{\numberset{R}}
\newcommand{\bmo}{B\!M\!O}

 % inner command, used by \rchi
\graphicspath{{figuri/}}

%%%%%%%%% MEAN INTEGRAL  %%%%%%%%%%
\def\Xint#1{\mathchoice
{\XXint\displaystyle\textstyle{#1}}%
{\XXint\textstyle\scriptstyle{#1}}%
{\XXint\scriptstyle\scriptscriptstyle{#1}}%
{\XXint\scriptscriptstyle\scriptscriptstyle{#1}}%
\!\int}
\def\XXint#1#2#3{{\setbox0=\hbox{$#1{#2#3}{\int}$ }
\vcenter{\hbox{$#2#3$ }}\kern-.6\wd0}}

\def\dashint{\Xint-}
%%%%%%%%%%%%%%%%%%%%%%%%%%%%%%

\begin{document}

\title{Extension  and embedding theorems for Campanato spaces on $C^{0,\gamma}$ domains}

\author{Damiano Greco\footnote{Swansea University,  Fabian Way, Swansea,Uk, e-mail: damiano.greco@swansea.ac.uk}\quad and\quad    Pier Domenico Lamberti\footnote{Dipartimento Tecnica e Gestione dei Sistemi Industriali, University of Padova, Stradella S. Nicola 3,  36100 Vicenza, Italy, e-mail: lamberti@math.unipd.it}}

\date{\today}

\maketitle

\begin{abstract}  We consider  Campanato spaces  with exponents $\lambda , p$ on domains of class $C^{0,\gamma}$  in the N-dimensional Euclidean space endowed with a natural anisotropic metric depending on $\gamma$. We discuss several results including the appropriate Campanato's embedding theorem and we prove that functions of those spaces can be extended to the whole of the Euclidean space without deterioration of the exponents $\lambda, p$.
\end{abstract}

{\bf Keywords}: Campanato spaces, BMO spaces, extension operators, boundary singularities, power-type cusps.  \\
{\bf 2010 Mathematics Subject Classification}: 46E30, 46E35,   42B35\\

\section{Introduction}

Given an open set  $\Omega$ in $\R^N$,  $1\le p<\infty $ and $\lambda >0$ the  Campanato space ${{\mathcal L}^{\lambda  }_{p}(\Omega ) }$  is the space of functions $f\in L^p_{loc}(\Omega)$ such that  the seminorn defined by 
\begin{equation}\label{clasintro}
\sup_{B  }\left(\frac{1}{|B\cap \Omega|^{\lambda}  }\int_{B\cap \Omega}|f(y)    
-\dashint_{B\cap \Omega}   f(z)dz  |^pdy \right)^{\frac{1}{p}}
\end{equation}
is finite. Here the supremum is taken over all balls $B$ with center in $\Omega$ and radius not exceeding the diameter of $\Omega$ and $|B\cap \Omega|$ denotes the Lebesgue measure of $B\cap\Omega$.  The importance of Campanato spaces is well-known in the literature, in particular because  they include other classes of function spaces, such as Morrey spaces if $0<\lambda <1$, $\bmo$ spaces if $\lambda =1$ and spaces of H\"{o}lder continuous functions if $\lambda >1$. This equivalence is established by the celebrated Campanato's embedding Theorem under suitable assumptions on the open set $\Omega$, see Theorem~\ref{daprato} for a statement.  

It is important to note that the   space $\bmo (\Omega)$  of functions with bounded mean oscillation can be formally defined in the same way by setting $p=\lambda =1$ in \eqref{clasintro}. However, in the classical definition
one takes the supremum over all balls {\it contained} in $\Omega$.   This subtle distinction is obviously not relevant if $\Omega = \R^N$ but also if $\Omega$ is a bounded open set with Lipschitz boundary or the half space: in these cases one can identify the two spaces by setting  $\bmo (\Omega)= {{\mathcal L}^{1  }_{1}(\Omega ) } $, see Lemma~\ref{equivalence}. However, in general the two spaces are different, see Example \ref{strip}.

It is impossible to give an account of all possible applications of Campanato spaces but  we would like to highlight the fact that  the integral characterisation of  
H\"{o}lder continuous functions by means of the seminorm \eqref{clasintro} is of fundamental importance in the study of apriori estimates  for elliptic and parabolic equations, in particular in connection with the variational approach developed  by Ennio De Giorgi\footnote{In fact, it is acknowledged in \cite{campa63, campa64} that the study of  these function spaces was suggested to Sergio Campanato by Ennio De Giorgi. This historical fact doesn't seem much known outside Italy, probably because papers \cite{campa63, campa64} are written in Italian.} for the solution of the nineteenth Hilbert's problem.  We refer to \cite{beveza}  for a recent article  on apriori estimates in Campanato spaces including further references. 

In this paper, we discuss the  problem of extending a function $f\in {\mathcal L^{\lambda  }_{p}(\Omega ) }$ to the whole of $\R^N$ by  a function $Tf$ belonging to  $ {\mathcal L}^{\lambda  }_{p}(\R^N ) $. The focus is  on open sets $\Omega$ with boundaries of class $C^{0,\gamma}$, $\gamma \in ]0,1]$.  This means that $\Omega$ is locally described  at the boundary  as the subgraph of a H\"{o}lder continuous function with exponent $\gamma$.  In particular, if   $\gamma <1$  the boundary may exhibits singular points such as  power-type cusps, see Figure~\ref{holder_patchy}. 
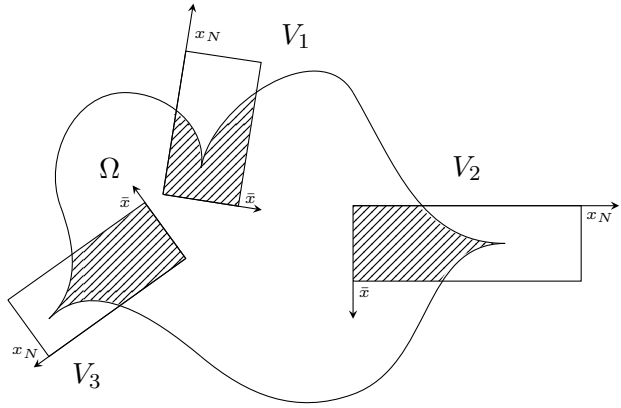
\begin{figure}\label{holder_patchy}
\centering
\begin{tikzpicture}
%\draw [help lines] (0,-3) grid (6,-3);

\draw (2,-2) to [out=80,  in=120] (4,-1);
\draw (4,-1) to [out=-60, in=180] (6,-3);
\draw (6,-3) to [out=-180, in=20 ] (4,-5);
\draw (4,-5) to [out=-160,in=-40] (2,-4.5);
\draw (2,-4.5) to [out=-220, in= 45] (0,-4);
\draw (0,-4) to [out=45,  in=-70]  (0.15, -2.5);
\draw  (0.15, -2.5) to [out=110, in=180]      (1,-1);
\draw (1,-1) to [out=0 , in= 80] (2,-2);

\pattern [pattern=north east lines]  (4,-3.5) 
to [out=0, in=180](5.15,-3.5)
to [out=60, in=180  ](6,-3)
to [out=180, in=-45 ](4.86,-2.5)
to [out=180, in=0](4,-2.5);

\pattern [pattern=north east lines] (1.5,-2.35)
to [out=80,  in=-93] (1.65,-1.28)
to [out=-20 , in=90 ] (2,-2)
to [out=80, in=-125] (2.73,-0.95)
to [out=-100, in=80] (2.491,-2.507);

 \pattern [pattern=north east lines] ( 1.8,-3.2)   
to [out=120,  in=-60] (1.26,-2.46) 
to [out=220 , in=40 ]  (0.32,-3.15)
to [out=-90, in=45] (0,-4)
to [out=42, in=158] (0.95,-3.82);

\draw (4,-3.5) -- (4,-2.5) --  (7,-2.5) -- (7,-3.5)-- (4,-3.5); %rettangolo a destra
\draw  (1.5,-2.35) -- (1.8,-0.45) --  (2.791,-0.6) -- (2.491,-2.507)-- (1.5,-2.35); % rettangolo in alto
\draw (-0.54,-3.75)  -- (1.26,-2.46) -- ( 1.8,-3.2)-- (0.0,-4.5)--(-0.54,-3.75) ;
\node at (3.25,-0.25) {$V_1$};
\node at (0.8, -2) {$\Omega$};
\node at (0.5,-4.75) {$V_3$};
\node at (5.5, -2) {$V_2$};

\draw [-stealth]  (4,-3.5) -- (4,-4) ;
\draw [-stealth]  (4,-2.5) -- (7.5,-2.5) ;
\node at  (4.15,-3.65) {\tiny{$\bar{x}$}};
\node at (7.25,-2.65) {\tiny{$x_N$}};

\draw[-stealth] (1.5,-2.35)--(2.8,-2.55526);
\node at (2.65,-2.4) {\tiny{$\bar{x}$}};
\draw [-stealth] (1.5,-2.35)--(1.9,12.03-711/60) ;
\node at (2.1,11.6-711/60) {\tiny{$x_N$}};

\draw[-stealth]( 1.8,-3.2) --(1.1,-2.23);
\draw [-stealth]  ( 1.8,-3.2) --(-0.2,-4.644);
\node at (1,-2.45) {\tiny{$\bar{x}$}};
\node at (-0.3,-4.45) {\tiny{$x_N$}};
\end{tikzpicture}
\caption{A domain of class $C^{0,\gamma}$.}
\end{figure}
The case $\gamma =1$ corresponds  to  the class of domains with Lipschitz continuous boundaries which, from the point of view of extension theory, represents the regular case and is already discussed in the literature, see  \cite{jones, stri} for the case $\lambda=p=1$.   We refer to \cite{gallia} for recent results on the extension for $\bmo$ spaces and their variants on uniform domains, as well as to \cite{bib2} for a remarkable decomposition theorem for the space $\bmo $ on arbitrary domains.

We note  from the very beginning that  the case $\lambda <1$ is the easy one, at least in principle. Indeed, for $\lambda <1$ Campanato spaces coincide with Morrey spaces and it is simple to prove that functions in the Morrey spaces can be extended by zero to the whole of $\R^N$, see Lemma~\ref{extensionbyzero}. Also the case $\lambda >1$ is not difficult if we identify Campanato spaces with spaces of H\"{o}lder continuous functions: these functions can be extended by means of the Bj\"{o}rk's Theorem (see e.g. \cite[Thm.~1.8.3]{bib4}). On the other hand, it is well-known that the extension problem for $\bmo$ functions is highly non-trivial. Note that in general  $\bmo$ functions cannot be extended by zero. The classical example is provided by the function $f(x)=\log x$, with $x>0$, considered as a function defined on $(0,\infty)$. The  extension by zero of $f$ defined  by setting $f(x)=0$ for  all $x<0$ does not belong to $\bmo (\R)$, see Example~\ref{log_0}.   

The extension problem for classical $\bmo$ spaces (where balls $B$ in \eqref{clasintro} are contained in $\Omega)$  was  studied in the fundamental paper  \cite{jones} by Peter Jones who  characterised the domains which allow the extension of $\bmo$ functions. These are known in the literature as uniform domains and include bounded domains with Lipschitz boundaries but not domains of class $C^{0,\gamma}$  if $\gamma <1$.  Up to our knowledge, not much is available in the literature in this case. 

As discussed in  \cite{lamves}, the pioneering analysis carried out in \cite{barozzi} and \cite{dapra} suggests that in the case of domains of class $C^{0,\gamma}$ the Euclidean metric  should be replaced by a metric depending on $\gamma$.  This metric, denoted by  $\delta_{\gamma}$,  is anisotropic and its definition depends on the direction of the possible cusps of the boundary. For example, if the boundary of $\Omega$ is represented by the cusp of equation $x_N=|\bar x |^{\gamma}$ where the elements   of ${\mathbb R}^N$ for $N\geq 2$ are written in the form $x=(\overline x,x_N)$, with $\overline x=(x_1,\dots , x_{N-1})\in{\mathbb R}^{N-1}$ and $x_N\in {\mathbb R}$, then the natural metric to be used is  
\begin{equation}\label{metricgamma}
\delta_{\gamma}(x,y)= \max\{ |\bar x-\bar y|^{\gamma}, |x_N-y_N| \}\, ,
\end{equation}
for all $x,y\in {\mathbb R}^N$.  It is important to note that the Lebesgue measure of the corresponding balls $B_{\gamma}$ of radius $r$ behaves like 
$r^{N_{\gamma}}$ where 
\begin{equation}\label{critical_exp}
N_{\gamma}= \frac{N-1}{\gamma}+1  .
\end{equation}
Moreover, if $\Omega$ is of class $C^{0,\gamma}$ also the Lebesgue measure of $ B_{\gamma}\cap \Omega$ behaves like $r^{N_{\gamma}}$: this property plays a crucial role in the analysis of Campanato spaces on domains and is called Property (A) in the literature.

By replacing the balls $B$ in \eqref{clasintro} by the anisotropic balls $B_{\gamma}$ we obtain the corresponding Campanato spaces 
${{\mathcal L}^{\lambda  }_{p,\gamma}(\Omega ) }$ depending on $\gamma$.
These spaces are better suited for the validity of embedding and extension theorems in the spirit of classical results. Indeed, by using the  general result of \cite{dapra} one can prove Campanato's embedding Theorem in the non-critical case $\lambda \ne 1$, see \cite{lamves}. On the other hand, in the critical case $\lambda =1$ it is possible to use the generalised John-Niremberg inequality from \cite{aalto} to prove that   the spaces ${{\mathcal L}^{1  }_{p,\gamma}(\Omega ) } $ are independent of $p$, a well-known fact in the Lipschitz case $\gamma=1$. See Theorem~\ref{daprato} for a detailed proof. 

The importance of using the metric $\delta_{\gamma}$ is also evident in the problem of extension. The simplest extension operator is provided by reflection. For example, given a cusp of the form $\Omega= \{ (\bar x, x_N)\in \R^N:\ x_N<|\bar x|^{\gamma}\}$ one would extend a function $f(\bar x, x_N)$ defined in $\Omega$ by setting 
\begin{equation}\label{exext}
Tf(\bar x , x_N)= f(\bar x,  2|\bar x|^{\gamma}-x_N   ),\ \  {\rm if} \ x_N>|\bar x|^{\gamma} .
\end{equation}
It is clear that the natural introduction of  the term $|\bar x|^{\gamma}$ in formula \eqref{exext} deteriorates the degree of smoothness of  $f$: for example, a Lipschitz function $f$  would be transformed into a H\"{o}lder continuous function of exponent $\gamma$. (This deterioration phenomenon is well-known and  is studied in \cite{bu} for Sobolev spaces of arbitrary order.) This example gives a strong hint about the fact that  incorporating $|\bar x|^{\gamma}$ in the metric as in \eqref{metricgamma} allows to keep the same smoothness exponent in the extension of functions.  

In this paper, we consider this problem for  Campanato spaces of arbitrary order $\lambda$ (including the theoretically  difficult case $\lambda =1$), not only for the sake of uniform treatment but also to emphasise  the preservation of the exponent $\gamma$ in the extension by reflection. 

First, we  consider the case of  elementary unbounded   domains of class $C^{0,\gamma}$. These  domains are defined as  subgraphs of H\"{o}lder continuous functions with exponent $\gamma$, defined on $\R^{N-1}$. As done in \cite{stri} for $\bmo $ functions in the Lipschitz case $\gamma =1$, in Theorem~\ref{estensioneholder} we prove that the even extension (that is, the reflection with respect to the boundary) preserves the Campanato spaces ${{\mathcal L}^{\lambda  }_{p,\gamma}}$ (note that the odd extension wouldn't work). 

Next, we consider the general case where different parts of the boundary have to be rotated in different directions in order to be represented as graphs of H\"{o}lder continuous functions with exponent $\gamma$, see Figure~\ref{holder_patchy}.
In this case the underlying metric has to be adapted to the different directions and this leads to the analysis of Campanato spaces  $ \mathcal{L}^{\lambda ,\mathcal R}_{p, \gamma }(\Omega) $ obtained as finite sums of Campanato spaces. Here ${\mathcal R}$ is the finite collection of rotations used to represent the boundary of $\Omega$ as mentioned above.

Note that for $\gamma =1$  the space $ \mathcal{L}^{\lambda ,\mathcal R}_{p, \gamma }(\Omega) $ is independent of ${\mathcal{R}}$ and coincides with the usual Campanato space since all metrics involved are strongly equivalent to the Euclidean metric, see Remark~\ref{patcheq}. On the other hand, since for $\gamma \ne 1$ the metrics arising from different directions are not strongly equivalent, we believe that it would be difficult (and probably not even particularly useful in applications) to give an intrinsic definition of the space  $ \mathcal{L}^{\lambda ,\mathcal R}_{p, \gamma }(\Omega) $ independent of ${\mathcal{R}}$.

We conclude the paper by formulating an extension problem  for a natural $\bmo_{\gamma}$  space associated with the metric $\delta_{\gamma}$ as in \eqref{bmogamma}, which we believe is open. 
 
%We note that the extension results of this paper are obtained in a unified way, without distinguishing the cases $\lambda \ne 1$ from  the case $\lambda =1$ (which corresponds to the tricky case of $\bmo$ functions).  

We refer  to the classical monograph  \cite{bib4} for a clear introduction to the theory of Campanato spaces and to  \cite{samko} for a recent survey. We also quote \cite{bib12} as a standard reference  for  $\bmo$ spaces and \cite{sawano} for an extensive study of Morrey spaces. Finally, we quote  papers \cite{fanlam}, \cite{lamves}, \cite{lamvio} for related recent  results concerning the extension problem for  Sobolev-Morrey spaces. 

This paper is organized as follows. Section 2 is  devoted to preliminary and auxiliary results which have also their own interest. In particular, we discuss Campanato's embedding Theorem, the relation between $\bmo$ spaces and Campanato spaces and the multiplication by $C^{\infty}$-functions in Campanato spaces. In Section 3 we discuss the extension problem,  we prove Theorem~\ref{estensioneholder} concerning the extension of Campanato spaces on elementary domains; moreover,  we define the Campanato spaces  $ \mathcal{L}^{\lambda ,\mathcal R}_{p, \gamma }(\Omega) $ and we prove the corresponding extension theorem; we also formulate an open problem.

\section{Preliminary results and embeddings}

\subsection{Notation}

As mentioned in the introduction, we denote  the elements of ${\mathbb R}^N$ for $N\geq 2$ by  $x=(\overline x,x_N)$, with $\overline x=(x_1,\dots , x_{N-1})\in{\mathbb R}^{N-1}$ and $x_N\in {\mathbb R}$. 
For  any   $\gamma\in ]0,1]$,  we   consider the metric  $\delta_{\gamma}$ in ${\mathbb R}^N$ defined by \eqref{metricgamma}.
The corresponding open balls  with  radius $r$, centred at $x$  are defined  by
\begin{eqnarray*}
B_{\gamma}(x,r)=\{y\in {\mathbb R}^N:\ \delta_{\gamma}(x,y)<r   \} .
\end{eqnarray*}
It is useful to observe that the Lebesgue measure  of $B_{\gamma}(x,y)$ equals   
$
2\omega_{N-1}r^{N_{\gamma}}    
$
where $\omega_{N-1}$ is the measure of the unit ball in ${\mathbb R}^{N-1}$ and $N_{\gamma}$ is defined in 
\eqref{critical_exp}.
The diameter of a set $A$ in ${\mathbb{R}}^N$ with respect to the metric $\delta_{\gamma}$ is denoted by $\delta_{\gamma}(A)$. Namely, 
$\delta_{\gamma}(A)=\sup\{\delta_{\gamma}(x,y):\ x,y\in A  \}$. Finally, by $\delta_{\gamma}(A,B)$ we denote the distance between two sets $A,B$ in $\R^N$ defined by  $\sup\{\delta_{\gamma}(x,y):\ x\in A, \  y\in B\}$.

Let $\Omega$ be an open set in ${\mathbb R}^N$.  To avoid taking care of minor details, we shall always  assume that $\Omega$ is a domain, which means that $\Omega$ is a connected open set.     If $f$ is a real-valued function defined in $\Omega$, we denote by 
$f_0$ the extension by zero of $f$. Namely, 
$$
f_0(x)=\left\{
\begin{array}{ll}f(x),& \ {\rm if}\  x\in \Omega,\\
0,& \ {\rm if}\  x\in \mathbb{R}^N\setminus\Omega .
\end{array}
 \right. 
$$
Let $p\in [1,\infty [$ and  $\lambda \in ]0,\infty [$. If $f$ is such that $f_0\in L^p_{loc}({\mathbb{R}}^N)$ we set 
\begin{equation}\label{normmorrey}
\| f \|_{L^{\lambda }_{p, \gamma }(\Omega ) }:=\left(\sup_{x\in \Omega}\,\sup_{r\in \left]0,  \delta_{\gamma}(\Omega )\right]  }\frac{1}{|B_{\gamma}(x, r)\cap \Omega|^{\lambda}}  \int_{B_{\gamma}(x, r)\cap \Omega}|f(y) |^pdy \right)^{\frac{1}{p}}
\end{equation}
and 
\begin{equation}\label{semi1}
| f |_{{\mathcal L}^{\lambda  }_{p, \gamma }(\Omega ) }:=\sup_{x\in \Omega}\,\sup_{r\in \left]0,  \delta_{\gamma}(\Omega )\right]  } \!\!  \left(\frac{1}{|B_{\gamma}(x, r)\cap \Omega|^{\lambda}  }\int_{B_{\gamma}(x, r)\cap \Omega}  \biggl|f(y)    
-\dashint_{B_{\gamma}(x, r)\cap \Omega}   f(z)dz  \biggr|^pdy \right)^{\frac{1}{p}}\, .
\end{equation}
The corresponding Morrey spaces are defined by 
$$
L^{\lambda }_{p, \gamma }(\Omega )=\{ f:\Omega\to {\mathbb{R}}:\ f_0\in L^p_{loc}({\mathbb{R}}^N),\  {\rm and}\ \| f \|_{L^{\lambda }_{p, \gamma }(\Omega ) }< \infty   \} .
$$
Similarly,  the  Campanato spaces are defined by 
$$
{\mathcal L}^{\lambda}_{p, \gamma }(\Omega )=\{ f:\Omega\to {\mathbb{R}}:\ f_0\in L^p_{loc}({\mathbb{R}}^N),\ {\rm and}\  | f |_{{\mathcal L}^{\lambda }_{p, \gamma }(\Omega ) }< \infty   \} . 
$$
Note that if $\Omega$ is bounded then $L^{\lambda }_{p, \gamma }(\Omega ), {\mathcal L}^{\lambda}_{p, \gamma }(\Omega )\subset L^p(\Omega)$,  $\| \cdot \|_{ L^{\lambda }_{p, \gamma }(\Omega ) } $  defines a norm in  $L^{\lambda }_{p, \gamma }(\Omega )$  while $| \cdot |_{{\mathcal L}^{\lambda }_{p, \gamma }(\Omega ) } $ is a seminorm in ${\mathcal L}^{\lambda }_{p, \gamma }(\Omega )$. In order to have a normed space,  it is customary to endow  the Campanato space  ${\mathcal L}^{\lambda }_{p, \gamma }(\Omega )$ with the norm defined by 
$$
\| f \|_{{\mathcal L}^{\lambda  }_{p, \gamma }(\Omega ) } := \|f \|_{L^p(\Omega)}+ | f |_{{\mathcal L}^{\lambda  }_{p, \gamma }(\Omega ) } \, ,
$$
for all $f\in {\mathcal L}^{\lambda  }_{p, \gamma }(\Omega )$.

We observe  that  ${L^{\lambda }_{p, 1 }(\Omega ) }$,  ${{\mathcal{L}}^{\lambda }_{p, 1 }(\Omega ) }$ are 
the classical Morrey and Campanato spaces  and we  recall that ${L^{\lambda }_{p, 1 }(\Omega ) }$ contains only the zero function for $\lambda >1$ and it coincides with $L^{\infty}(\Omega)$ for $\lambda =1$.

\subsection{Campanato vs $\bmo$ spaces}
\label{bmosec}

In the literature the Campanato space $\mathcal{L}^{1}_{1,1}(\mathbb{R}^N)$ is better known as the space of functions with bounded mean oscillation  and is denoted by  $\bmo (\mathbb{R}^N)$.  In the case of a domain  $\Omega $ strictly contained in $\mathbb{R}^N$, the definition of $\bmo (\Omega )$ is not univocal. One classical definition is the following
\begin{equation} 
\bmo (\Omega ) =\left\{ f\in L^1_{loc}(\Omega):\     |f|_{\bmo (\Omega )} \ne \infty   \right\}
\end{equation}
where 
$$ \sup_{Q\subset \Omega}   \dashint_{Q} \left|f(y)    
-\dashint_{Q}   f(z)dz  \right|dy
$$ 
and the supremum is taken on all cubes $Q$ with sides parallel to the coordinate planes (or, equivalently, all Euclidean balls contained in $\Omega$), see e.g., \cite{jones}.   Other authors identify  $\bmo (\Omega )$ with  $\mathcal{L}^{1}_{1,1}(\Omega)$, see e.g., \cite{samko}. See also e.g., \cite{aalto} for a general definition in metric spaces.   From the definitions it's clear that $\mathcal{L}^{1}_{1,1}(\Omega)$ is a subspace of $\bmo(\Omega)$ no matter what $\Omega$ is.  Nevertheless,  there are cases where the two spaces coincide and cases where they don't.  

\begin{lemma}\label{equivalence}
Let $\Omega$ be a bounded Lipschitz domain in $\R^N$. Then, 
$$\bmo(\Omega)= \mathcal{L}^{1}_{1,1}(\Omega).$$
\end{lemma}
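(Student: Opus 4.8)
Since the excerpt already records that $\mathcal{L}^1_{1,1}(\Omega)\subseteq\bmo(\Omega)$, the whole content of the statement is the reverse inclusion together with the quantitative bound $|f|_{\mathcal{L}^1_{1,1}(\Omega)}\le C\,|f|_{\bmo(\Omega)}$. (The easy direction is immediate: every Euclidean ball contained in $\Omega$ is in particular a ball centred in $\Omega$ whose intersection with $\Omega$ is the ball itself, so, after the standard comparison between cubes and balls, the $\bmo$ supremum is dominated by the Campanato supremum.) So the plan is to fix $f\in\bmo(\Omega)$, write $\|f\|_*:=|f|_{\bmo(\Omega)}$, and estimate the Campanato seminorm of $f$. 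The first reduction I would use is the elementary fact that for any measurable set $E$ of positive finite measure and any constant $c$ one has $\dashint_E|f-\dashint_E f|\le 2\dashint_E|f-c|$; hence it suffices to exhibit, for each ball $B=B(x,r)$ with $x\in\Omega$ and $0<r\le\operatorname{diam}(\Omega)$, a constant $c_B$ with $\dashint_{B\cap\Omega}|f-c_B|\le C\|f\|_*$ and $C$ independent of $B$.

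The geometry enters through the fact that a bounded Lipschitz domain is a uniform domain, from which I would extract three standard quantitative consequences with constants depending only on $\Omega$: (i) Property (A), $|B(x,r)\cap\Omega|\ge A\,r^N$; (ii) the interior corkscrew condition, providing a ball $B^{*}=B(x^{*},r/M)\subseteq B(x,r)\cap\Omega$ compactly inside $\Omega$; and (iii) a Harnack chain condition, by which two interior balls of radius $\rho$ lying in $B(x,r)\cap\Omega$ can be joined by a chain of overlapping interior balls of comparable radii whose length is $O\bigl(1+\log(r/\rho)\bigr)$. I would then set $c_B:=\dashint_{B^{*}}f$ and take a Whitney decomposition of $B\cap\Omega$ into interior cubes $Q_j\subseteq\Omega$ with $\operatorname{diam}Q_j\approx d_j:=\operatorname{dist}(Q_j,\partial\Omega)$.

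For each $Q_j$, chaining it to $B^{*}$ through overlapping interior balls and telescoping the elementary $\bmo$ estimate $\bigl|\dashint_{R}f-\dashint_{R'}f\bigr|\le C\|f\|_*$ over consecutive comparable overlapping balls $R,R'\subseteq\Omega$ yields $\bigl|\dashint_{Q_j}f-c_B\bigr|\le C\bigl(1+\log(r/d_j)\bigr)\|f\|_*$. Combining this with the fact that $\dashint_{Q_j}|f-\dashint_{Q_j}f|\le C\|f\|_*$ (each $Q_j$ being an admissible cube inside $\Omega$), I obtain
\[
\dashint_{B\cap\Omega}|f-c_B|
\le \frac{1}{|B\cap\Omega|}\sum_j\int_{Q_j}\Bigl(\bigl|f-\dashint_{Q_j}f\bigr|+\bigl|\dashint_{Q_j}f-c_B\bigr|\Bigr)
\le \frac{C\|f\|_*}{|B\cap\Omega|}\sum_j|Q_j|\bigl(1+\log(r/d_j)\bigr).
\]

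The decisive step, and the only place where the Lipschitz regularity is genuinely used, is closing the sum: one must show $\sum_j|Q_j|\bigl(1+\log(r/d_j)\bigr)\le C\,r^N\approx|B\cap\Omega|$. I would group the Whitney cubes by dyadic size $d_j\approx 2^{-k}r$ and invoke the Lipschitz boundary-layer volume bound $|\{y\in B(x,r):\operatorname{dist}(y,\partial\Omega)<t\}|\le C\,t\,r^{N-1}$; the sum is then comparable to $\int_0^{r}\bigl(1+\log(r/t)\bigr)r^{N-1}\,dt$, which is finite precisely because $\log(r/t)$ is integrable near $t=0$. This gives the uniform bound and hence the theorem. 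The main obstacle is exactly this simultaneous control of the \emph{logarithmic} chain length and the \emph{linear} decay of the boundary-layer volume, whose product remains integrable only thanks to the uniform (corkscrew and Harnack-chain) geometry of Lipschitz domains; on a $C^{0,\gamma}$ cusp with $\gamma<1$ this balance fails, which is consistent with the two spaces differing there. An alternative route would be to straighten $\partial\Omega$ by finitely many bi-Lipschitz boundary charts and a partition of unity, reducing to a model half-space and using the bi-Lipschitz invariance of both seminorms; this is conceptually cleaner but pays for the reduction in localization bookkeeping, so I would favour the direct chaining argument above.
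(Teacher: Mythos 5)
Your proposal is correct, but it takes a genuinely different route from the paper. The paper handles the nontrivial inclusion $\bmo(\Omega)\subset\mathcal{L}^{1}_{1,1}(\Omega)$ in two lines: it invokes Jones's extension theorem (\cite[Theorem 1]{jones}) to extend $f\in\bmo(\Omega)$ to $\tilde f\in\bmo(\R^N)=\mathcal{L}^{1}_{1,1}(\R^N)$, and then applies the restriction lemma (Lemma~\ref{restrizioneholder}), which is available because a bounded Lipschitz domain satisfies condition \eqref{lemmacusp1} with $\gamma=1$, to conclude that $f=\tilde f|_{\Omega}\in\mathcal{L}^{1}_{1,1}(\Omega)$. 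You instead prove the inclusion directly, with no extension at all: corkscrew ball as reference average, Whitney cubes, Harnack-chain telescoping giving $\bigl|\dashint_{Q_j}f-c_B\bigr|\le C\bigl(1+\log(r/d_j)\bigr)\|f\|_*$, and the Lipschitz boundary-layer volume bound to sum the logarithms against Property (A). All the geometric ingredients you invoke do hold for bounded Lipschitz domains and the dyadic summation closes, so the argument is sound, modulo routine bookkeeping you would need to spell out: one should take Whitney cubes of $\Omega$ meeting $B$ (rather than a Whitney decomposition of $B\cap\Omega$, whose cubes see $\partial B$ as well), dispose separately of the trivial case $B\subseteq\Omega$, and note that the chains may leave $B$ provided they stay in $\Omega$, which is all the $\bmo$ seminorm requires; also, the Harnack-chain and corkscrew properties themselves already use the Lipschitz (uniform-domain) geometry, not only the layer bound. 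The trade-off between the two proofs: the paper's is short but rests on Jones's theorem, a considerably deeper result (it constructs an actual $\bmo$ extension) than what the lemma needs; yours is self-contained and quantitative, and it makes visible exactly where the regularity of $\partial\Omega$ enters --- the balance between logarithmic chain length and linear decay of the boundary layer --- which also explains heuristically why the identification should fail on cuspidal $C^{0,\gamma}$ domains, consistent with the paper's emphasis on replacing the Euclidean metric by $\delta_\gamma$ there. The price is the longer argument and the standard uniform-domain machinery that must be made precise.
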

\begin{proof}
It's  enough to prove the inclusion $ \bmo(\Omega)\subset \mathcal{L}^{1}_{1,1}(\Omega)$. Under our assumptions on $\Omega$, by \cite[Theorem $1$]{jones} it follows that   every function $f\in \bmo(\Omega)$ can be extended to a function $\tilde{f}\in \bmo(\R^N)$.  On the other hand,   since $\Omega$ satisfies condition \eqref{lemmacusp1} with $\gamma=1$,  it's easy to see that the restriction $\tilde{f}|_{\Omega}=f$ belongs to $ \mathcal{L}^{1}_{1,1}(\Omega)$ (see Lemma \ref{restrizioneholder} below). 
\end{proof}
\example \label{strip} Let $\Omega=\R\times ]-1,1[$ and $f(x_1,x_2)=x_1$.  It's easy to see that $f\in \bmo(\Omega)$ (see \cite[Remark $1$]{bolk}).  However,  if we consider a family of squares  $Q=]-r,r[^{\,2}$ centred at the origin and having edges of lenght $2r$,  $r>1$  we obtain
\begin{equation*}
\frac{1}{|Q\cap \Omega|}\int_{Q\cap \Omega}|f-f_{Q\cap \Omega}|=\frac{1}{2r}\int^{r}_{-r}|x_1|dx_1=\frac{r}{2}
\end{equation*}
which diverges as $r$ goes to infinity. Thus $f\notin  \mathcal{L}^{1}_{1,1}(\Omega)$.

\subsection{Embedding theorems and equivalent norms}

Most results of this paper are based on the validity of the so-called condition (A) on the domain $\Omega$. For $\gamma =1$ this terminology was used already in the paper \cite{campa63}. 
 Namely, it is required that 
 there exists  a positive constant $c$ independent of $x$ such that 
\begin{equation}
 \label{lemmacusp1}
|B_{\gamma}(x,r)\cap \Omega| \geq cr^{N_{\gamma}},
\end{equation}
for all $x\in \overline{\Omega}$ and $r\in ]0,\delta_\gamma(\Omega)]$.   

\begin{remark}If the domain  $\Omega$ satisfies inequality \eqref{lemmacusp1},  we can define the spaces $\mathcal{L}^{\lambda}_{p,\gamma}(\Omega)$ (respectively $L^{\lambda}_{p,\gamma}(\Omega)$) by replacing the quantity  $|B_\gamma(x,r)\cap \Omega|^{\lambda}$ by $r^{\lambda N_\gamma }$ in \eqref{semi1} (respectively \eqref{normmorrey}).
\end{remark}

 In particular, under assumption \eqref{lemmacusp1}   Campanato's embedding Theorem holds. Here 
$C^{0,\alpha}(\overline{\Omega},\delta_\gamma)$ denotes the space of H\"{o}lder continuous functions with respect to the metric $\delta_{\gamma}$,  with exponent $\alpha\in ]0,1]$, endowed with the norm
$$
\| f\|_{ C^{0,\alpha}(\overline{\Omega},\delta_\gamma) }=\sup_{x\in\overline{\Omega} }|f(x)|+\sup_{x,y\in \overline{\Omega}}\frac{|f(x)-f(y)|}{\delta_{\gamma}^{\alpha}(x,y)}\, .
$$

\begin{teorema}(Campanato-Da Prato-John-Nirenberg)\label{daprato}
Let $\Omega$ be a bounded domain in  $\R^N$ satisfying condition \eqref{lemmacusp1}. The following statements hold:
\begin{itemize}
\item[$(i)$] If $0<\lambda<1$ then $\mathcal{L}^{\lambda}_{p,\gamma}(\Omega)\simeq L^{\lambda}_{p,\gamma}(\Omega)$;
\item[$(ii)$] If $\lambda =1$ then $\mathcal{L}^{\lambda}_{p,\gamma}(\Omega)\simeq  \mathcal{L}^{\lambda}_{1,\gamma}(\Omega) $ for all $p\in [1, \infty [$;
\item[$(ii)$] If $\lambda>1$ then $\mathcal{L}^{\lambda}_{p,\gamma}(\Omega)\simeq C^{0,\alpha}(\overline{\Omega},\delta_\gamma)$
where 
$\alpha=\frac{N_\gamma(\lambda-1)}{p}.$
\end{itemize}
\end{teorema}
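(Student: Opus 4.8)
The three assertions rest on one common geometric fact, so I would begin by recording it. Condition \eqref{lemmacusp1} gives the lower bound $|B_{\gamma}(x,r)\cap\Omega|\ge cr^{N_{\gamma}}$, while the trivial inclusion $B_{\gamma}(x,r)\cap\Omega\subseteq B_{\gamma}(x,r)$ together with the computation of the measure of the full ball gives the upper bound $|B_{\gamma}(x,r)\cap\Omega|\le 2\omega_{N-1}r^{N_{\gamma}}$. Hence there are constants $0<a\le A$ with $ar^{N_{\gamma}}\le|B_{\gamma}(x,r)\cap\Omega|\le Ar^{N_{\gamma}}$ for all $x\in\overline{\Omega}$ and $r\in\,]0,\delta_{\gamma}(\Omega)]$, and consequently $|B_{\gamma}(x,2r)\cap\Omega|\le (A/a)2^{N_{\gamma}}|B_{\gamma}(x,r)\cap\Omega|$. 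In other words, $(\overline{\Omega},\delta_{\gamma},\mathcal L^N)$ is an Ahlfors-regular, hence doubling, metric measure space of homogeneous dimension $N_{\gamma}$. This is precisely the hypothesis under which the abstract theorems of \cite{dapra} and \cite{aalto} are formulated, and I would state it as a standing assumption before splitting into cases.

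For the non-critical cases $(i)$ and $(iii)$ the engine is a single telescoping estimate. Writing $f_{x,\rho}$ for the average of $f$ over $B_{\gamma}(x,\rho)\cap\Omega$, and using the inclusion $B_{\gamma}(x,r)\cap\Omega\subseteq B_{\gamma}(x,2r)\cap\Omega$, Jensen's inequality, and the two-sided measure bound above, I would derive
\[
|f_{x,r}-f_{x,2r}|\le\left(\frac{|B_{\gamma}(x,2r)\cap\Omega|^{\lambda}}{|B_{\gamma}(x,r)\cap\Omega|}\right)^{\frac1p}|f|_{\mathcal L^{\lambda}_{p,\gamma}(\Omega)}\le K\,|f|_{\mathcal L^{\lambda}_{p,\gamma}(\Omega)}\,r^{\frac{N_{\gamma}(\lambda-1)}{p}},
\]
with $K$ depending only on $N,\gamma,p,a,A$. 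Iterating over dyadic radii produces a geometric series of ratio $2^{N_{\gamma}(\lambda-1)/p}$, and the sign of $N_{\gamma}(\lambda-1)$ dictates everything. In case $(i)$ the exponent is negative, so summing from $r$ up to $\delta_{\gamma}(\Omega)$ converges and controls the average $f_{x,r}$; combined with the elementary splitting $|f|^{p}\le 2^{p-1}(|f-f_{x,r}|^{p}+|f_{x,r}|^{p})$ this bounds $\|f\|_{L^{\lambda}_{p,\gamma}}$ by $|f|_{\mathcal L^{\lambda}_{p,\gamma}}+\|f\|_{L^{p}(\Omega)}$, giving $\mathcal L^{\lambda}_{p,\gamma}(\Omega)\simeq L^{\lambda}_{p,\gamma}(\Omega)$ (the reverse inclusion being immediate from $\int|f-f_{x,r}|^{p}\le 2^{p}\int|f|^{p}$). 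In case $(iii)$ the exponent $\alpha=N_{\gamma}(\lambda-1)/p$ is positive, so summing down to radius $0$ shows $f_{x,r}$ is Cauchy as $r\to0$, producing a continuous representative $\tilde f$; comparing averages over balls centred at two points $x,y$ with $r\sim\delta_{\gamma}(x,y)$ through a common enclosing ball (where condition \eqref{lemmacusp1} is used to keep the truncated measures comparable) yields $|\tilde f(x)-\tilde f(y)|\le C\delta_{\gamma}(x,y)^{\alpha}|f|_{\mathcal L^{\lambda}_{p,\gamma}}$, while the converse $C^{0,\alpha}(\overline{\Omega},\delta_{\gamma})\hookrightarrow\mathcal L^{\lambda}_{p,\gamma}(\Omega)$ is a direct computation in which the exponents balance exactly when $\alpha p=N_{\gamma}(\lambda-1)$. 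Having verified the homogeneity hypothesis, one may equally invoke the general embedding of \cite{dapra} to obtain $(i)$ and $(iii)$ at once.

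The critical case $(ii)$ is the genuinely delicate one. Here Jensen's inequality gives the trivial inclusion $\mathcal L^{1}_{p,\gamma}(\Omega)\hookrightarrow\mathcal L^{1}_{1,\gamma}(\Omega)$ for every $p$, and the content of the statement is the reverse inclusion with equivalent seminorms, which is exactly the self-improving John–Nirenberg phenomenon. My plan is to verify that the doubling space $(\overline{\Omega},\delta_{\gamma},\mathcal L^N)$ established above satisfies the structural hypotheses of the generalized John–Nirenberg inequality of \cite{aalto}, and then to apply that result to conclude that the $L^1$ mean oscillation controls the $L^p$ mean oscillation for all $p\in[1,\infty[$, with constants independent of $f$.

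The main obstacle I anticipate is precisely the verification in case $(ii)$: doubling of the ambient measure alone is not always sufficient for the abstract John–Nirenberg inequality, and one must check that the truncated space $\overline{\Omega}$ inherits whatever chain or connectivity condition the result of \cite{aalto} requires, while ensuring that the anisotropy of $\delta_{\gamma}$ does not spoil the comparison of overlapping balls $B_{\gamma}(\cdot,r)\cap\Omega$. A secondary, though more routine, difficulty appears in case $(iii)$, where the comparison of averages at two distinct points lying near a cusp of the boundary is exactly the step that genuinely uses condition \eqref{lemmacusp1} rather than merely the doubling consequence drawn from it.
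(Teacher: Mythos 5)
Your proposal is correct and follows essentially the same route as the paper: the easy inclusion $\mathcal L^{1}_{p,\gamma}(\Omega)\subset\mathcal L^{1}_{1,\gamma}(\Omega)$ via Jensen/H\"older, the reverse inclusion in the critical case $(ii)$ via the generalized John--Nirenberg inequality of \cite{aalto} applied to the doubling metric measure space $(\Omega,{\delta_\gamma}_{|\Omega})$ (with doubling deduced from \eqref{lemmacusp1} exactly as you indicate), and the non-critical cases $(i)$ and $(iii)$ obtained from the general result of \cite{dapra}, which the paper simply cites rather than reproving by your dyadic telescoping sketch. The only detail the paper adds that you leave implicit is the control of the $L^p$-norm itself (via the triangle inequality applied to a ball large enough that $B_\gamma=\Omega$), needed to pass from seminorm to full norm equivalence.
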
 

 It is understood that if $\gamma \alpha >1$ the space $C^{0,\alpha}(\overline{\Omega},\delta_\gamma)$
consists of constant functions only. 
Here the symbol  $ \simeq $ is used to indicate that two spaces coincide  (possibly identifying functions which coincide  almost everywhere) and the corresponding norms are equivalent.  
The proofs of statements $(i)$ and $(iii)$ in the previous  theorem can be found in \cite{dapra}. The proof of statement $(ii)$ can be performed  by using the celebrated John-Niremberg inequality and (in a form or another) is known  in the literature for the Euclidean metric, see e.g., \cite[Thm. 4.3]{samko} and \cite[Thm.~14]{bolk}.     Since we do  not have a specific reference for a proof of statement $(ii)$ above for $\gamma\ne 1$, we find it convenient to include it here. \\

 \begin{proof}   We prove  statement $(ii)$ in Theorem~\ref{daprato}. 
Let $f\in\mathcal{L}^{1}_{p,\gamma}(\Omega)$.  By H\"older's inequality, we deduce  that 
\begin{equation}\label{holder}
 \dashint_{B_\gamma(x,r)\cap \Omega}|f(y)-f_{B_\gamma(x,r)\cap \Omega}|  dy
%&  \le \frac{2\omega_{N-1}}{|B_\gamma(x,r)\cap \Omega|}\int_{B_\gamma(x,r)\cap \Omega}|f(y)-f_{B_\gamma(x,r)\cap \Omega}|\\
%& \le \left(\int_{B_\gamma(x,r)\cap \Omega}|f(y)-f_{B_\gamma(x,r)\cap \Omega}|^p\right)^{\frac{1}{p}}|B_\gamma(x,r)\cap \Omega|^{\frac{p-1}{p}-1}\\
 \le \left(\dashint_{B_\gamma(x,r)\cap \Omega}|f(y)-f_{B_\gamma(x,r)\cap \Omega}|^p dy\right)^{\frac{1}{p}},
\end{equation}
which proves that $\mathcal{L}^{1}_{p,\gamma}(\Omega)\subset  \mathcal{L}^{1}_{1,\gamma}(\Omega)$. The corresponding estimate for the norms which proves that this embedding is continuous can be easily deduced by the previous inequality and H\"{o}lder's inequality. 

We now prove the  reverse  inclusion. Let  $f\in   \mathcal{L}^{1}_{1,\gamma}(\Omega)$.    Then,  by the general result in \cite[Theorem $5.2$]{aalto} applied to the metric space $(\Omega ,{ \delta_{\gamma}}_{|\Omega }   )$, it follows that 
 $$\left|\left\{x\in B_\gamma:\left|  f(x)- f_{B_\gamma}\right|>t\right\}\right|\le c_1 e^{-\frac{c_2 t}{a }}|B_\gamma|$$
where $a = |f|_{ \mathcal{L}^{1}_{1,\gamma}(\Omega)}$, $B_{\gamma}$ is any ball in the metric space $(\Omega , {\delta_{\gamma}}_{|\Omega})$ and $c_1,c_2$  are positive constants independent of $ f$, $t$ and $B_{\gamma}$.   (Note that since $\Omega$ satisfies \eqref{lemmacusp1} then the measure  ${\delta_{\gamma}}_{|\Omega}$  is doubling as required in \cite{aalto}.) Thus, 
\begin{equation}\label{john}
\begin{split}
& \frac{1}{|B_\gamma  |}\int_{B_\gamma }| {f}(y)-  {f}_{B_\gamma }|^pdy\\
&=\frac{p}{|B_\gamma |}\int_{0}^{\infty}t^{p-1}|\left\{y\in B_\gamma: |  {f}(y)-  {f}_{B_\gamma }|>t\right\}| dt\\
& \le c_1 p \int_{0}^{\infty} t^{p-1}e^{-\frac{c_2 t}{a }}  dt=\frac{c_1}{c^p_2}\Gamma(p+1) |f|_{ \mathcal{L}^{1}_{1,\gamma}(\Omega)  }^p   =c_3
 |f|_{ \mathcal{L}^{1}_{1,\gamma}(\Omega)  }^p  ,
\end{split}
\end{equation}
where we have set $c_3=c_1c^{-p}_2\Gamma(p+1)$.  In order to estimate the $L^p$-norm of the function $f$, we simply use triangle inequality to deduce by 
\eqref{john} that 
\begin{eqnarray}\label{john1} \| f\|_{L^p(B_{\gamma})}\le  |B_{\gamma}|^{1/p}  \left(\frac{\| f \|_{L^1(B_{\gamma})}}{|B_{\gamma}|}   +c_3^{1/p}  |f|_{ \mathcal{L}^{1}_{1,\gamma}(\Omega)  }   \right) .
\end{eqnarray}
By \eqref{john} and inequality  \eqref{john1} applied to a ball with radius sufficiently large to ensure\footnote{recall that the balls $B_{\gamma}$ used in this part of the proof are restrictions to $\Omega$ of balls in ${\mathbb{R}}^N$ endowed with the metric $\delta_{\gamma}$} that $B_{\gamma}=\Omega$,  we deduce that $ \mathcal{L}^{1}_{1,\gamma}(\Omega) \subset \mathcal{L}^{1}_{p,\gamma}(\Omega)$, the embedding being continuous. 
\end{proof}

The previous theorems points out  also the fact that the spaces $\mathcal{L}^{\lambda}_{p,\gamma}(\Omega)$ heavily depends on $\gamma$. The following example clarifies this  by showing that in general 
 $\mathcal{L}^{\lambda}_{p,\gamma}(\Omega)\neq \mathcal{L}^{\lambda}_{p,1}(\Omega)$ if $\gamma\neq 1$.
\example Let $\Omega$ be the subset of $\R^2$ defined by 
$$\Omega=\left\{(x_1,x_2)\in \R^2: 0<x_2<1-|x_1|^{\gamma} \right\} $$
where $\gamma\in ]0,1[$. Let $1<\lambda\le 1+\frac{1}{\gamma+1}$.  If we choose $\alpha=(1+\tfrac{1}{\gamma})(\lambda-1)$,  Theorem~\ref{daprato} ensures  that the function $f(x)=\text{sign}(x_1)|x_1|^{\gamma \alpha}$ belongs to $\mathcal{L}^{\lambda}_{1,\gamma}(\Omega)$.  Note that this is true since $\Omega$ satisfies property \eqref{lemmacusp1},  $\gamma \alpha\le 1$ by our assumptions on $\lambda$,  and $f\in C^{0,\alpha}(\overline{\Omega},\delta_\gamma)$.  We now claim that $f\notin \mathcal{L}^{\lambda}_{1,1}(\Omega)$. We can prove this by using two arguments. \\
The first argument exploits an explicit computation.  Let $Q=]-r,r[\times ]0,2r[$ be the  family of squares centred at the point $(0,r)$ having radius $r$ small enough in order to guarantee that  $Q\subset \Omega$.  Then,  $f_{Q}=0$ (since $f$ is odd with respect to the variable $x_1$) and 
\begin{equation}
\frac{1}{|Q|^{\lambda}}\int_{Q}|f(x)|\,dx=\frac{4r}{|Q|^{\lambda}}\int_{0}^{r}|x_1|^{(\gamma+1)(\lambda-1)}\, dx_1=C(\lambda,\gamma)r^{(\gamma-1)(\lambda-1)}
\end{equation}
which diverges as $r$ goes to zero. This proves that $|f|_{\mathcal{L}^{\lambda}_{1,1}(\Omega)}=\infty$. \\
The second argument  is based on the classical Campanato's embedding theorem (with the Euclidean metric). Indeed, if we assume that $f\in \mathcal{L}^{\lambda}_{1,1}(\Omega )$ then by Theorem~\ref{daprato} with $\gamma=1$ we would obtain that $f\in C^{0,\beta}_{loc} ( \Omega,\delta_1  )$ where 
$\beta = 2(\lambda -1)$. Since $\beta > \gamma \alpha$, we clearly see  a contradiction from the definition of $f$. \\

In what follows, we shall often use  the  seminorm  
    \begin{equation}\label{eqsem}\left|f\right|^{(1)}_{\mathcal{L}^{\lambda}_{p,\gamma}(\Omega)}:=\left(\!\sup_{x\in\Omega}\,\sup_{r\in \left]0,  \delta_{\gamma}(\Omega )\right]  } \frac{1}{|B_{\gamma}(x,r)\cap\Omega|^{\lambda-1}}   \dashint_{B_\gamma(x,r)\cap\Omega}\dashint_{B_\gamma(x,r)\cap\Omega} \!\!  |f(y)-f(z)|^p dzdy\!\right)^{\frac{1}{p}},
    \end{equation}
 which turns out to be equivalent to \eqref{semi1}.   Indeed, the following lemma holds. 
 \begin{lemma}
\label{normeeq} Let $\Omega$ be a  domain in $\R^N$.
Then
$$ |f|_{\mathcal{L}^{\lambda}_{p, \gamma }(\Omega)}\le \left|f\right|^{(1)}_{\mathcal{L}^{\lambda}_{p, \gamma } (\Omega)}\le 2 |f|_{\mathcal{L}^{\lambda}_{p, \gamma } (\Omega)} $$ 
for all real-valued functions $f$ defined on $\Omega$ such that $f_0\in L^p(\Omega)$. 
\end{lemma}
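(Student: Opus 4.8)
The plan is to prove the two inequalities separately, after fixing notation: for an admissible ball I write $B=B_\gamma(x,r)\cap\Omega$, $|B|$ for its Lebesgue measure, and $f_B=\dashint_B f(z)\,dz$ for the average of $f$ over $B$. The whole argument reduces to an integral comparison that holds for each fixed $B$, after which I take the supremum over all admissible $x$ and $r$. It is convenient to first unfold the inner double average into ordinary integrals, writing
$$\frac{1}{|B|^{\lambda-1}}\dashint_{B}\dashint_{B}|f(y)-f(z)|^p\,dz\,dy=\frac{1}{|B|^{\lambda+1}}\int_{B}\int_{B}|f(y)-f(z)|^p\,dz\,dy,$$
so that both seminorms are expressed with unnormalised integrals and I only need to track the power of $|B|$.

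For the left inequality I would use Jensen's inequality. Since $f_B=\dashint_B f(z)\,dz$, one has $f(y)-f_B=\dashint_B\bigl(f(y)-f(z)\bigr)\,dz$ for every $y\in B$, and the convexity of $t\mapsto |t|^p$ gives $|f(y)-f_B|^p\le\dashint_B|f(y)-f(z)|^p\,dz$. Integrating in $y$ over $B$ and dividing by $|B|^{\lambda}$ yields exactly the bound of the expression in \eqref{semi1} by the double-integral expression in \eqref{eqsem}, and passing to the supremum over balls gives $|f|_{\mathcal{L}^{\lambda}_{p,\gamma}(\Omega)}\le |f|^{(1)}_{\mathcal{L}^{\lambda}_{p,\gamma}(\Omega)}$.

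For the right inequality I would insert $f_B$ and split, writing $f(y)-f(z)=\bigl(f(y)-f_B\bigr)-\bigl(f(z)-f_B\bigr)$. The elementary convexity estimate $|a-b|^p\le 2^{p-1}(|a|^p+|b|^p)$ then gives $|f(y)-f(z)|^p\le 2^{p-1}\bigl(|f(y)-f_B|^p+|f(z)-f_B|^p\bigr)$; integrating over $B\times B$ and exploiting the symmetry of the two resulting terms produces the factor $2^{p}\,|B|\int_B|f(y)-f_B|^p\,dy$. Dividing by $|B|^{\lambda+1}$ and taking $p$-th roots bounds the double-integral expression by $2\,|f|_{\mathcal{L}^{\lambda}_{p,\gamma}(\Omega)}$, and again taking the supremum gives $|f|^{(1)}_{\mathcal{L}^{\lambda}_{p,\gamma}(\Omega)}\le 2|f|_{\mathcal{L}^{\lambda}_{p,\gamma}(\Omega)}$.

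I do not expect any genuine obstacle here: both bounds are pure consequences of convexity and hold verbatim on any measure space, so the metric $\delta_\gamma$ and the geometry of $\Omega$ enter only through the family of sets $B_\gamma(x,r)\cap\Omega$ over which the suprema are taken. The only points requiring mild care are the bookkeeping of the exponents of $|B|$ (the shift from $\lambda-1$ to $\lambda+1$ arising from unfolding the two normalised averages) and the standing hypothesis $f_0\in L^p(\Omega)$, which ensures that $f_B$ is well defined and all integrals are finite, so that each manipulation is legitimate.
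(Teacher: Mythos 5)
Your proposal is correct and follows essentially the same route as the paper's proof: Jensen's inequality applied to $f(y)-f_{B}=\dashint_{B}\bigl(f(y)-f(z)\bigr)\,dz$ for the left inequality, and the splitting $f(y)-f(z)=\bigl(f(y)-f_{B}\bigr)-\bigl(f(z)-f_{B}\bigr)$ with the convexity bound $2^{p-1}(|a|^p+|b|^p)$ and symmetry for the right one, yielding the same constant $2$ after taking $p$-th roots. No discrepancies worth noting.
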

\begin{proof}  Let $f$ be as in the statement and  $B_{\gamma}$ be any ball centred in $\Omega$ with radius $r\le \delta_{\gamma}(\Omega)$.  By Jensen's inequality  we have
\begin{eqnarray}\lefteqn{
\frac{1}{|B_{\gamma}\cap\Omega|^{\lambda}}\int_{B_{\gamma}\cap\Omega}\left|f(y)-f_{B_{\gamma}\cap\Omega}\right|^p dy}\nonumber\\
& & =\frac{1}{|B_{\gamma}\cap\Omega|^{\lambda}}\int_{B_{\gamma}\cap\Omega}\left|\dashint_ {B_{\gamma}\cap\Omega}(f(y)-f(z))dz\right|^p dy\nonumber\\
%& & \le \frac{1}{|B_{\gamma}\cap\Omega|^{\lambda}}\int_{B_{\gamma}\cap\Omega}\left(\dashint_ {B_{\gamma}\cap\Omega}|f(y)-f(z)|dz\right)^p dy\nonumber\\
& &  \le \frac{1}{|B_{\gamma}\cap\Omega|^{\lambda-1}}\dashint_{B_{\gamma}\cap\Omega}\dashint_{B_{\gamma}\cap\Omega}|f(y)-f(z)|^p dzdy\nonumber\\
\end{eqnarray}
which proves one of the inequalities in the statement.
For the other inequality, we easily see that 
\begin{eqnarray}\lefteqn{
\dashint_{B_{\gamma}\cap\Omega}\dashint_{B_{\gamma}\cap\Omega}|f(y)-f(z)|^p dzdy} \nonumber \\
& &\le  \dashint_{B_{\gamma}\cap\Omega}\dashint_{B_{\gamma}\cap\Omega}\left(|f(y)-f_{B_{\gamma}\cap\Omega}|+|f(z)-f_{B_{\gamma}\cap\Omega}|\right)^p dzdy\nonumber\\
& & \le 2^{p-1} \dashint_{B_{\gamma}\cap\Omega}\dashint_{B_{\gamma}\cap\Omega} \left( |f(y)-f_{B_{\gamma}\cap\Omega}|^p + |f(z)-f_{B_{\gamma}\cap\Omega}|^p\right)dzdy\nonumber \\
& & \le 2^p |B_{\gamma}\cap\Omega|^{\lambda-1} \dashint_{B_{\gamma}\cap\Omega}|f|^p_{\mathcal{L}^{\lambda}_{p,\gamma}(\Omega)}dz 
 = 2^p |B_{\gamma}\cap\Omega|^{\lambda-1}  |f|^p_{\mathcal{L}^{\lambda}_{p,\gamma}(\Omega)}.
\end{eqnarray}
\end{proof}

\subsection{Multiplication in Campanato spaces}

Campanato spaces  $\mathcal{L}^{\lambda}_{p,\gamma}(\Omega)$ are closed under multiplication by smooth functions. This is simple to prove for Morrey and H\"{o}lder spaces, but is not straightforward  for $\lambda =1$. A proof for $\bmo (\R^N)$ can be found in \cite[Lemma~11]{bolasi}. 
Here we give an alternative proof valid for all $\gamma \in ]0,1]$ and $p\in [1,\infty [$. 

In the next result  we use the following condition on the domain $\Omega$:
\begin{equation}\label{inf_ball}
\mathcal{
I}(r):=\inf_{x\in \Omega}|B_\gamma(x,r)\cap \Omega|>0\quad \forall\ r\in ]0,\delta_\gamma(\Omega)].
\end{equation}
Note that this condition includes for example domains satisfying property \eqref{lemmacusp1}  in which case   $\mathcal{I}(r)\ge cr^{N_\gamma}$.
Note also that the condition $ \gamma \alpha \le 1 $ is used here only when $\lambda >1$ in order to avoid the trivial case where  all functions in $\mathcal{L}^{\lambda}_{p,\gamma}(\Omega)$ are constant. 
\begin{lemma}\label{product} Let $p\in [1,\infty[  $, $\gamma\in ]0,1]$, $\lambda \in ]0,\infty [$.
  If $\lambda >1$ assume also  that $\gamma \alpha \le 1 $ where $\alpha $ is defined in Theorem~\ref{daprato}. Let $\Omega$ be a domain  in $\R^N$ satisfying \eqref{inf_ball}. Let 
  $f\in \mathcal{L}^{\lambda}_{p,\gamma}(\Omega)$ and $\varphi\in  C^{\infty}(\Omega)$.  
Suppose that either $f$ or $\varphi$ has support contained in a compact subset $K$ of $\Omega$.
Then there exists a positive constant $C$  such that   
\begin{equation}\label{product_final}
|f\varphi|_{ \mathcal{L}^{\lambda}_{p,\gamma}(\Omega)}\le C\left(| f|_{ \mathcal{L}^{\lambda}_{p,\gamma}(\Omega)}+\|f\|_{L^{p}(U)}\right)
\end{equation}
where $U$  is a suitable bounded neighborhood of $K $  which depends only on $K$ and the distance $\delta_{\gamma}(K, \partial \Omega)$.
In particular,  if $f\in L^{p}(\Omega)$ then 
\begin{equation}\label{product2}
\left\|f\varphi\right\|_{\mathcal{L}^{\lambda}_{p,\gamma}(\Omega)}\le C\left\| f\right\|_{ \mathcal{L}^{\lambda}_{p,\gamma}(\Omega)}.
\end{equation}
The constant $C$ depends  only on  $N,p, \lambda, \gamma, \Omega,  \mathcal{I}, K$ and $\varphi$. 
\end{lemma}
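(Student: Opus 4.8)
The plan is to bound the equivalent double-integral seminorm of Lemma~\ref{normeeq}, since $|f\varphi|_{\mathcal{L}^{\lambda}_{p,\gamma}(\Omega)}\le |f\varphi|^{(1)}_{\mathcal{L}^{\lambda}_{p,\gamma}(\Omega)}$. First I would reduce to the situation in which $\varphi$ is bounded together with its first derivatives and supported in a bounded set. Indeed, if $\mathrm{supp}\,f\subset K$ one may replace $\varphi$ by $\psi\varphi$ with $\psi\in C^{\infty}_{c}(\Omega)$ equal to $1$ on a neighbourhood of $K$, which leaves the product $f\varphi$ unchanged; if instead $\mathrm{supp}\,\varphi\subset K$ there is nothing to do. In both cases $f\varphi$ is supported in $K$, so in the supremum defining the seminorm only the balls $B:=B_{\gamma}(x,r)\cap\Omega$ that \emph{meet} $K$ give a nonzero contribution (on the others $f\varphi\equiv 0$ and the oscillation vanishes). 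I would then fix once and for all a bounded neighbourhood $U$ of $K$ whose size is controlled by $\delta_{\gamma}(K,\partial\Omega)$ and set $L:=\sup_{U}|\nabla\varphi|$.

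On such a ball I would use the pointwise identity
\[
(f\varphi)(y)-(f\varphi)(z)=\varphi(y)\bigl(f(y)-f(z)\bigr)+f(z)\bigl(\varphi(y)-\varphi(z)\bigr)
\]
together with $|a+b|^{p}\le 2^{p-1}(|a|^{p}+|b|^{p})$ to split the normalised double integral over $B$ into two pieces. The piece carrying the factor $\varphi(y)$ is immediately dominated by $2^{p-1}\|\varphi\|_{L^{\infty}}^{p}$ times $\frac{1}{|B|^{\lambda-1}}\dashint_{B}\dashint_{B}|f(y)-f(z)|^{p}\,dz\,dy$, hence by $2^{2p-1}\|\varphi\|_{L^{\infty}}^{p}\,|f|^{p}_{\mathcal{L}^{\lambda}_{p,\gamma}(\Omega)}$, using the definition \eqref{eqsem} and Lemma~\ref{normeeq}. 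This is the harmless term, controlled by $|f|_{\mathcal{L}^{\lambda}_{p,\gamma}(\Omega)}$ alone.

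The substance of the proof is the second piece, which I would organise according to the radius of $B$, with a threshold $r_{0}\le 1$ depending on $K$ and $\delta_{\gamma}(K,\partial\Omega)$. For \emph{large} balls, $r\ge r_{0}$, I would simply bound $|\varphi(y)-\varphi(z)|\le 2\|\varphi\|_{L^{\infty}}$ and use condition \eqref{inf_ball}, which gives $|B|\ge\mathcal{I}(r_{0})>0$ (indeed $\mathcal{I}(r)\ge cr^{N_{\gamma}}$ under \eqref{lemmacusp1}), together with $\int_{B}|f|^{p}\le\|f\|^{p}_{L^{p}(U)}$; this range is immediately harmless. For \emph{small} balls, $r<r_{0}$, I would instead exploit that $\varphi$ is Lipschitz with respect to $\delta_{\gamma}$ on $U$: since on $B$ one has $|\bar y-\bar z|\le 2r^{1/\gamma}$ and $|y_{N}-z_{N}|\le 2r$ with $1/\gamma\ge 1$ and $r<r_{0}\le 1$, the Euclidean estimate $|\varphi(y)-\varphi(z)|\le L|y-z|$ turns into $|\varphi(y)-\varphi(z)|\le C_{\gamma}L\,r$. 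Thus the whole matter reduces to the estimate
\[
\sup_{B\ \text{meets}\ K,\ r<r_{0}}\ \frac{r^{p}}{|B|^{\lambda}}\int_{B}|f|^{p}\,dz\ \le\ C\bigl(|f|_{\mathcal{L}^{\lambda}_{p,\gamma}(\Omega)}+\|f\|_{L^{p}(U)}\bigr)^{p}.
\]

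To establish this I would separate $\int_{B}|f|^{p}$ into an oscillation part, which contributes $\lesssim r^{p}\,|f|^{p}_{\mathcal{L}^{\lambda}_{p,\gamma}}$ (bounded, via Lemma~\ref{normeeq}), and a \emph{mean part} $\frac{r^{p}}{|B|^{\lambda-1}}|f_{B}|^{p}$. The latter is the genuine obstacle, since it cannot be controlled by the seminorm alone, and this is exactly where the embedding Theorem~\ref{daprato} enters and where the local $L^{p}$-norm becomes unavoidable. For $0<\lambda<1$ the whole quantity $\frac{1}{|B|^{\lambda}}\int_{B}|f|^{p}$ is already dominated by the Morrey norm $\|f\|^{p}_{L^{\lambda}_{p,\gamma}}\lesssim(|f|_{\mathcal{L}^{\lambda}_{p,\gamma}}+\|f\|_{L^{p}(U)})^{p}$. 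For $\lambda=1$ I would use the John--Nirenberg inequality in the form appearing in the proof of Theorem~\ref{daprato}$(ii)$: telescoping over the dyadic $\delta_{\gamma}$-scales between $r$ and $r_{0}$, and using that \eqref{lemmacusp1} makes the measure doubling, yields $|f_{B}|\lesssim\|f\|_{L^{p}(U)}+|f|_{\mathcal{L}^{1}_{p,\gamma}}\log(r_{0}/r)$, whence $r^{p}|f_{B}|^{p}\to 0$ and stays bounded. For $\lambda>1$ I would invoke $\mathcal{L}^{\lambda}_{p,\gamma}\simeq C^{0,\alpha}(\overline{\Omega},\delta_{\gamma})$: then $f$ is bounded and $\delta_{\gamma}$-H\"older, $\varphi$ is locally $\delta_{\gamma}$-Lipschitz, and the hypothesis $\gamma\alpha\le 1$ keeps the product in the same H\"older class. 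Collecting the three contributions, taking the $p$-th root and the supremum over $B$ gives \eqref{product_final}, and \eqref{product2} follows by adding $\|f\varphi\|_{L^{p}(\Omega)}\le\|\varphi\|_{L^{\infty}}\|f\|_{L^{p}(\Omega)}$. The main difficulty is thus entirely concentrated in the mean term for small balls in the critical case $\lambda=1$.
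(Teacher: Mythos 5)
The genuine gap is in your treatment of large balls. After splitting the product, you bound the second piece by $\frac{1}{|B|^{\lambda}}\int_{B}|f|^{p}$ and then invoke $\int_{B}|f|^{p}\le\|f\|^{p}_{L^{p}(U)}$. That inequality is true only when ${\rm supp}\, f\subset K$; the lemma equally covers the case ${\rm supp}\,\varphi\subset K$ with $f$ an arbitrary (non compactly supported) function of $\mathcal{L}^{\lambda}_{p,\gamma}(\Omega)$, and this is precisely the case the paper needs later (in Lemma~\ref{ext_elementary_hold} one takes $\Omega=\R^N$, $\varphi=\psi_k$ compactly supported, and $f$ an extension defined on the whole of $\R^N$). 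In that case, for $z\notin K$ and $y\in K$ the integrand $|f(z)|^{p}|\varphi(y)-\varphi(z)|^{p}=|f(z)|^{p}|\varphi(y)|^{p}$ does not vanish, so your second piece contains a term of size $\frac{|K|}{|B|^{\lambda+1}}\int_{B}|f|^{p}$ over arbitrarily large balls; controlling it would require an upward chaining of the means $f_{B}$ across scales, which the weak hypothesis \eqref{inf_ball} (it carries no doubling information, unlike \eqref{lemmacusp1}) does not obviously permit, and which you do not supply. The clean fix is what the paper does in \eqref{r_big}: for $r\ge\bar r$ do not split the product at all, but use directly that $f\varphi$ is supported in $K$, so that $\int_{B}|f\varphi-(f\varphi)_{B}|^{p}\le 2^{p}\|\varphi\|^{p}_{L^{\infty}(K)}\|f\|^{p}_{L^{p}(K)}$, and then divide by $|B|^{\lambda}\ge\mathcal{I}(\bar r)^{\lambda}$.

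Your small-ball analysis, on the other hand, is sound and in the critical case $\lambda=1$ it is genuinely different from the paper's: you telescope over dyadic scales inside $U$ to get $|f_{B}|\lesssim\|f\|_{L^{p}(U)}+|f|_{\mathcal{L}^{1}_{p,\gamma}(\Omega)}\log(r_{0}/r)$ and absorb the logarithm with the factor $r^{p}$ gained from $|\varphi(y)-\varphi(z)|\lesssim r$, whereas the paper applies H\"older's inequality with an auxiliary exponent $q>\max\{1,\gamma N_{\gamma}/p\}$, places $f$ in $L^{pq}(U)$ via the John--Nirenberg embedding of Theorem~\ref{daprato}$(ii)$, and controls $\varphi$ through a Campanato--H\"older norm. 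Your chaining is elementary and never actually needs John--Nirenberg, despite your attribution of it to that inequality, so this part is a legitimate (arguably simpler) alternative. Two smaller points: $U$ must be chosen to satisfy condition \eqref{lemmacusp1}, not merely to be a bounded neighbourhood of $K$, since you implicitly apply Theorem~\ref{daprato} and Lemma~\ref{restrizioneholder} on $U$; and for $\lambda>1$ your product argument really requires $\alpha\le 1$, since a smooth $\varphi$ with $\partial_{N}\varphi\neq 0$ fails to be in $C^{0,\alpha}(\overline{U},\delta_{\gamma})$ when $\alpha>1$ (a caveat that, to be fair, affects the paper's own proof in the regime $1<\alpha\le 1/\gamma$ as well).
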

\begin{proof}  Let   $U$ be a bounded domain satisfying property \eqref{lemmacusp1}  and such that $K\subset U\subset \Omega$. Let $$\bar{r}:=\frac{\min\left\{\delta_\gamma(K, \partial U), \delta_\gamma(U, \partial \Omega)\right\}}{4}.$$
Let $x\in \Omega$ and  $\bar{r}\le r\le  \delta_\gamma(\Omega)$.  It holds
\begin{equation}\label{r_big}
\begin{split}
& \frac{1}{|B_\gamma(x,r)\cap \Omega|^\lambda}     \int_{B_\gamma(x,r)\cap \Omega}|f(y)\varphi(y)-(f\varphi)_{B_\gamma(x,r)\cap \Omega}|^p \\
& \le \frac{2^{p}\left\|\varphi\right\|^p_{L^{\infty}(K)}}{|B_\gamma(x,\bar{r})\cap \Omega|^{\lambda}}\left\|f\right\|^p_{L^{p}(K)} \le \frac{2^{p}\left\|\varphi\right\|^p_{L^{\infty}(K)}}{{\mathcal{I}(\bar{r})}^{\lambda}} \left\|f\right\|^p_{L^{p}(K)},
\end{split}
\end{equation}
where $\mathcal{I}(\bar{r})$ is defined in \eqref{inf_ball}.\\
We can therefore restrict our attention to values of $r$ with  $0<r<\bar{r}$. Note that if a ball $B_\gamma$ is such that $B_\gamma\cap K=\emptyset$ we trivially have 
$$ \frac{1}{|B_\gamma\cap \Omega|^\lambda}  \int_{B_\gamma\cap \Omega}|f(y)\varphi(y)-(f\varphi)_{B_\gamma\cap \Omega}|^p =0.$$ 
In particular, if we set $\widetilde U:=\cup_{x\in K}B_{\gamma}(x,2\bar r)$, we see that  $B_\gamma(x,r)\cap K=\emptyset$   if $x\in \Omega\setminus \widetilde U$.   We also note that   if $x\in \widetilde U$ then $B_\gamma(x,r)\subset U$.   For these reasons,  it suffices to control the quantity 
\begin{equation}
\sup_{x\in \widetilde U}\sup_{r\in ]0,\bar{r}[}\frac{1}{|B_\gamma(x,r)|^\lambda}\int_{B_\gamma(x,r)}|f(y)\varphi(y)-(f\varphi)_{B_\gamma(x,r)}|^{p}.
\end{equation}
Since $B_\gamma(x,r)\subset U$ whenever $x\in\widetilde U$, we have that 
\begin{equation}\label{BMO_campanato}
\begin{split}
&\sup_{x\in \widetilde{U}}\sup_{r\in ]0,\bar{r}[}\frac{1}{|B_\gamma(x,r)|^\lambda}\int_{B_\gamma(x,r)}|f(y)\varphi(y)-(f\varphi)_{B_\gamma(x,r)}|^{p} \\
&\le \sup_{x\in U} \sup_{r\in]0,\delta_{\gamma}(U)]}\frac{1}{|B_\gamma(x,r)\cap U|^\lambda}\int_{B_\gamma(x,r)\cap U}|f(y)\varphi(y)-(f\varphi)_{B_\gamma(x,r)\cap U}|^{p},
\end{split}
\end{equation}
hence it suffices  to estimate the quantity $|f\varphi|^{p}_{\mathcal{L}^{\lambda}_{p,\gamma}(U)}$.  
We analyse first the case $\lambda=1$.
Keeping in mind  Lemma \ref{normeeq}, we estimate the appropriate integral from \eqref{eqsem} as follows:
\begin{equation}\label{split1}
\dashint_{B_\gamma\cap U }\dashint_{B_\gamma\cap U  }|f(y)\varphi(y)-f(z)\varphi(z)|^pdydz \le  2^{p-1}(I_1+I_2)
\end{equation}
where  we have set 
\begin{equation}\label{split2}
\begin{split}
& I_1=\dashint_{B_\gamma\cap U }\dashint_{B_\gamma\cap U }|\varphi(y)|^p |f(y)-f(z)|^pdydz\\
& I_2=\dashint_{B_\gamma\cap U }\dashint_{B_\gamma\cap U}|f(z)|^p |\varphi(y)-\varphi(z)|^p dydz,
\end{split}
\end{equation}
and for simplicity we have denoted by $B_{\gamma}$ a generic ball with center in $U$ and radius non exceeding the diameter of $U$. 
 Clearly, we have 
\begin{equation}\label{prox}I_1\le ||\varphi||^p_{L^{\infty}(U ) }  |f|^p_{ \mathcal{L}^{1}_{p,\gamma}(U )}
.\end{equation}
Now let $q>\max\left\{1,\frac{\gamma N_\gamma}{p}\right\}$ be fixed. Observe that by Theorem~\ref{daprato}, function $f$ belongs to any $L^r(U)$  (in particular to $L^{pq}(U)$) and its $L^r$-norm is estimated by  its Campanato norm.   By combining H\"older, Jensen inequality, Lemma~\ref{normeeq}  and Theorem~\ref{daprato}, we get that 
\begin{equation}\label{prox1}
\begin{split}
 I_2 & \le  \frac{1}{|B_\gamma\cap U|}\left\|f\right\|^p_{L^{pq}(B_\gamma\cap U)}\left(\int_{B_\gamma\cap U}\left(\dashint_{B_\gamma\cap U}|\varphi(y)-\varphi(z)|^pdy \right)^{q'} dz \right)^{\frac{1}{q'}}\\
& \le\left\|f\right\|^p_{L^{pq}(B_\gamma\cap U)}\left( |B_{\gamma}\cap U|^{ 1-q' }      \dashint_{B_\gamma\cap U}\dashint_{B_\gamma\cap U}|\varphi(y)-\varphi(z)|^{pq'}  dydz\right)^{\frac{1}{q'}}\\
& \le C \|f\|^p_{L^{pq}(U)}  |\varphi|^p_{\mathcal{L}^{q'}_{pq',\gamma}(U)}  \le C   \|f\|^p_{ \mathcal{L}^{1}_{p,\gamma}(U )}
\|\varphi\|^p_{C^{0,\beta}(\bar{U},\delta_\gamma)}
\\
\end{split}
\end{equation}
where $q'=q/(q-1) $ and 
$\beta=\frac{N_{\gamma} }{pq}$ (note that $\gamma \beta <1$  by our choice of $q$ and $\|\varphi\|_{C^{0,\beta}(\bar{U},\delta_\gamma)}$ is finite).

By \eqref{r_big}, \eqref{split1},   \eqref{prox}, \eqref{prox1}  and Lemma~\ref{normeeq}, we conclude that 
$$| f\varphi |_{ \mathcal{L}^{1}_{p,\gamma}(\Omega )} \le C\left(
\|f\|_{ \mathcal{L}^1_{p,\gamma}(\Omega )}+\|f\|_{L^{p}(U)}\right).
$$
and the proof is complete for $\lambda=1$. \\
If $\lambda\neq 1$,  by Theorem \ref{daprato}, 
 the seminorm in \eqref{BMO_campanato} is controlled  (up to a positive constant not depending on $f$) by the corresponding Morrey or H\"older norm in $U$,  $\|f\varphi\|^p_{L^{\lambda}_{p,\gamma}(U)}$ and $\|f\varphi\|^p_{C^{0,\alpha}(\overline{U}, \delta_\gamma)}$ respectively.  Moreover,  by the definitions of the latter norms,  Theorem~\ref{daprato} and Lemma~\ref{restrizioneholder}  below we have
\begin{equation*}
\begin{split}
\|f\varphi\|_{L^{\lambda}_{p,\gamma}(U)} \le ||\varphi||_{L^{\infty}(U)}\|f\|_{L^{\lambda}_{p,\gamma}(U)} \le C\|f\|_{\mathcal{L}^{\lambda}_{p,\gamma}(U)} \le  \widetilde{C}\|f\|_{\mathcal{L}^{\lambda}_{p,\gamma}(\Omega)},
\end{split}
\end{equation*}
and similarly 
$\|f\varphi\|_{C^{0,\alpha}(\overline{U}, \delta_\gamma)} \le\widetilde{C}\|f\|_{\mathcal{L}^{\lambda}_{p,\gamma}(\Omega)}$ (here we use  the fact that $\varphi \in C^{0,\alpha}(\overline{U}, \delta_\gamma)$ since $\gamma \alpha \le1$ by assumption and $\varphi$ is smooth). 
Thus \eqref{product_final} follows also for $\lambda\ne1$. 
\end{proof}

The following analogous result can be useful

\begin{lemma}\label{productbis}  
Let $p\in [1,\infty[  $, $\gamma\in ]0,1]$, $\lambda \in ]0,\infty [$.
  If $\lambda >1$ assume also  that $\gamma \alpha \le 1 $ where $\alpha $ is defined in Theorem~\ref{daprato}. Let $\Omega$ be a bounded domain  in $\R^N$ satisfying \eqref{lemmacusp1} for all $x\in \overline{\Omega}$ and some positive constant $c$.
Let  $f\in \mathcal{L}^{\lambda}_{p,\gamma}(\Omega)$ and $\varphi\in  C_c^{\infty}(\R^N)$.  
Then there exists a positive constant $C$  depending  only on  $N, p,\lambda , \gamma, \Omega, c $ and $\varphi$ such that 
\begin{equation}\label{product_final0}
|f\varphi|_{ \mathcal{L}^{\lambda}_{p,\gamma}(\Omega)}\le C \| f\|_{ \mathcal{L}^{\lambda}_{p,\gamma}(\Omega)}.
\end{equation}
\end{lemma}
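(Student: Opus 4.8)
The plan is to imitate the proof of Lemma~\ref{product}, but to exploit the key structural simplification of the present hypotheses: here $\Omega$ is itself bounded and satisfies condition \eqref{lemmacusp1} on all of $\overline{\Omega}$. Consequently Theorem~\ref{daprato} applies \emph{directly} to $\Omega$, so there is no need for an auxiliary domain $U$, no splitting into large and small radii, and no localisation near a compact set: one works throughout with balls $B_\gamma=B_\gamma(x,r)$ centred in $\Omega$ with $r\le\delta_\gamma(\Omega)$. The price is that $\varphi$ no longer vanishes near $\partial\Omega$, so the boundary genuinely contributes; however, property \eqref{lemmacusp1} keeps $|B_\gamma\cap\Omega|$ comparable to $r^{N_\gamma}$ with no degeneration at cusps, so the machinery goes through.

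I would first treat the critical case $\lambda=1$. By Lemma~\ref{normeeq} it suffices to bound the double-integral seminorm \eqref{eqsem}, and on each ball I would use the triangle inequality to split
$$
\dashint_{B_\gamma\cap\Omega}\dashint_{B_\gamma\cap\Omega}|f(y)\varphi(y)-f(z)\varphi(z)|^p\,dy\,dz\le 2^{p-1}(I_1+I_2),
$$
with $I_1,I_2$ exactly as in \eqref{split2} but with $U$ replaced by $\Omega$. The term $I_1$ is bounded at once by $\|\varphi\|_{L^\infty(\R^N)}^p$ times the double-integral seminorm of $f$, hence by $C|f|_{\mathcal{L}^{1}_{p,\gamma}(\Omega)}^p$. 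For $I_2$ I would fix $q>\max\{1,\gamma N_\gamma/p\}$ and repeat the H\"older--Jensen computation of \eqref{prox1}, using that Theorem~\ref{daprato} gives $f\in L^{pq}(\Omega)$ with $\|f\|_{L^{pq}(\Omega)}\le C\|f\|_{\mathcal{L}^{1}_{p,\gamma}(\Omega)}$, to reach
$$
I_2\le C\,\|f\|_{L^{pq}(\Omega)}^p\,\|\varphi\|_{C^{0,\beta}(\overline{\Omega},\delta_\gamma)}^p,\qquad \beta=\tfrac{N_\gamma}{pq}.
$$

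The step I expect to be the delicate one — and the only place where the hypotheses on $\varphi$ really bite — is the finiteness of $\|\varphi\|_{C^{0,\beta}(\overline{\Omega},\delta_\gamma)}$ up to the boundary. Since $\gamma\beta<1$ by the choice of $q$, and a Euclidean-smooth function restricted to the bounded set $\overline{\Omega}$ lies in $C^{0,\beta}(\overline{\Omega},\delta_\gamma)$ — this is Lemma~\ref{restrizioneholder}, which rests on the elementary fact that $\delta_\gamma(y,z)\ge|y-z|/\sqrt2$ for points with bounded horizontal separation when $\gamma\le1$ — this norm is finite and depends only on $\varphi$ and $\Omega$. Combining the two bounds and invoking Lemma~\ref{normeeq} once more yields $|f\varphi|_{\mathcal{L}^{1}_{p,\gamma}(\Omega)}\le C\|f\|_{\mathcal{L}^{1}_{p,\gamma}(\Omega)}$, which is \eqref{product_final0} for $\lambda=1$.

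Finally, the non-critical cases $\lambda\neq1$ would be dispatched exactly as at the end of the proof of Lemma~\ref{product}. By Theorem~\ref{daprato} the seminorm $|f\varphi|_{\mathcal{L}^{\lambda}_{p,\gamma}(\Omega)}$ is controlled by the Morrey norm $\|f\varphi\|_{L^{\lambda}_{p,\gamma}(\Omega)}$ when $\lambda<1$ and by the H\"older norm $\|f\varphi\|_{C^{0,\alpha}(\overline{\Omega},\delta_\gamma)}$ when $\lambda>1$. In the first case $\|f\varphi\|_{L^{\lambda}_{p,\gamma}(\Omega)}\le\|\varphi\|_{L^\infty(\R^N)}\|f\|_{L^{\lambda}_{p,\gamma}(\Omega)}\le C\|f\|_{\mathcal{L}^{\lambda}_{p,\gamma}(\Omega)}$; in the second, since $\gamma\alpha\le1$ the function $\varphi$ lies in $C^{0,\alpha}(\overline{\Omega},\delta_\gamma)$ (again by Lemma~\ref{restrizioneholder}) and this space is an algebra, whence $\|f\varphi\|_{C^{0,\alpha}(\overline{\Omega},\delta_\gamma)}\le C\|f\|_{C^{0,\alpha}(\overline{\Omega},\delta_\gamma)}\le C\|f\|_{\mathcal{L}^{\lambda}_{p,\gamma}(\Omega)}$. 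In all cases the constant depends only on $N,p,\lambda,\gamma,\Omega,c$ and $\varphi$, as claimed.
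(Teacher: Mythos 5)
Your proposal follows essentially the same route as the paper's proof: the paper likewise dispatches the cases $\lambda\ne 1$ by reducing to Morrey and H\"older norms via Theorem~\ref{daprato}, and for $\lambda=1$ it reruns the computation \eqref{split1}--\eqref{prox1} from the proof of Lemma~\ref{product} with $U$ replaced by $\Omega$ (the paper additionally uses statement $(ii)$ of Theorem~\ref{daprato} to reduce to $p=1$ first, a convenience you skip harmlessly by working with general $p$, exactly as in \eqref{prox1}). One small quibble: the fact that a smooth function restricted to $\overline{\Omega}$ belongs to $C^{0,\beta}(\overline{\Omega},\delta_\gamma)$ is not Lemma~\ref{restrizioneholder} (that lemma concerns restrictions of Campanato functions between nested domains); the correct justification is precisely the elementary comparison $|y-z|\le C\,\delta_\gamma(y,z)$ on bounded sets that you state, which is also what the paper uses implicitly.
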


\begin{proof} By Theorem~\ref{daprato} it suffices to prove the lemma only in the case $\lambda =1$  since for $\lambda \ne 1$ the norms under consideration are equivalent to Morrey and H\"{o}lder's norms for which the statement is straightforward. Moreover, again by Theorem~\ref{daprato} it suffices to consider the case $p=1$. Then the proof can be easily done  by  following the lines from \eqref{split1} to \eqref{prox1} in the proof of the previous lemma, with $U$ replaced by $\Omega$. 
\end{proof}

\subsection{Restrictions}

The following easy  lemma  will be used in the sequel.

\begin{lemma}\label{restrizioneholder}
Let $\Omega$ be a domain  in $\mathbb\R^N$ such that inequality \eqref{lemmacusp1} holds for all $x\in \overline{\Omega}$ and a positive constant $c$ independent of $x$.  If 
$f$ belongs to the space $\mathcal{L}^{\lambda}_{p,\gamma}(\widetilde{\Omega})$ for some domain $\widetilde{\Omega}$ containing $\Omega$,  then its restriction $f_{|\Omega}$ to $\Omega$ belongs to $\mathcal{L}^{\lambda}_{p,\gamma}(\Omega)$ and there exists a positive constant $C$ depending only on $N,p,\lambda$ and $c$ such that 
$$
\left|f_{|\Omega}\right|_{ \mathcal{L}^{\lambda}_{p,\gamma}(\Omega)  } \le C \left| f \right|_{\mathcal{L}^{\lambda}_{p,\gamma}(\widetilde{\Omega})}\, .
$$
\end{lemma}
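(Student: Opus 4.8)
The plan is to fix an arbitrary admissible ball and show that the oscillation of $f$ over $B_\gamma\cap\Omega$ is controlled by the oscillation of $f$ over the larger set $B_\gamma\cap\widetilde{\Omega}$, the only genuine input being the measure comparison provided by condition \eqref{lemmacusp1}. Concretely, let $x\in\Omega$ and $0<r\le\delta_{\gamma}(\Omega)$, and abbreviate $B_\gamma=B_\gamma(x,r)$. Since $\Omega\subseteq\widetilde{\Omega}$ we have $x\in\widetilde{\Omega}$ and $r\le\delta_{\gamma}(\Omega)\le\delta_{\gamma}(\widetilde{\Omega})$, so $B_\gamma$ is also an admissible ball for the seminorm $|f|_{\mathcal{L}^{\lambda}_{p,\gamma}(\widetilde{\Omega})}$; checking this admissibility is the one bookkeeping point worth stating explicitly, and it is immediate from $\Omega\subseteq\widetilde{\Omega}$.

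First I would replace the local mean $f_{B_\gamma\cap\Omega}$ by $f_{B_\gamma\cap\widetilde{\Omega}}$ using the standard observation that means are quasi-minimizers of the $L^p$ oscillation. By the triangle inequality in $L^p$ together with Jensen's inequality, for any constant $c$ one has $\bigl(\dashint_{B_\gamma\cap\Omega}|f-f_{B_\gamma\cap\Omega}|^p\bigr)^{1/p}\le 2\bigl(\dashint_{B_\gamma\cap\Omega}|f-c|^p\bigr)^{1/p}$. Taking $c=f_{B_\gamma\cap\widetilde{\Omega}}$ and then enlarging the domain of integration from $B_\gamma\cap\Omega$ to $B_\gamma\cap\widetilde{\Omega}$, whose integrand is nonnegative, would give
$$
\int_{B_\gamma\cap\Omega}\bigl|f-f_{B_\gamma\cap\Omega}\bigr|^p\,dy\le 2^p\int_{B_\gamma\cap\widetilde{\Omega}}\bigl|f-f_{B_\gamma\cap\widetilde{\Omega}}\bigr|^p\,dy\le 2^p\,\bigl|B_\gamma\cap\widetilde{\Omega}\bigr|^{\lambda}\,|f|^p_{\mathcal{L}^{\lambda}_{p,\gamma}(\widetilde{\Omega})},
$$
the last inequality being just the definition of the $\widetilde{\Omega}$-seminorm applied to the admissible ball $B_\gamma$.

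The second step is the measure comparison, which is where hypothesis \eqref{lemmacusp1} enters. Dividing by $|B_\gamma\cap\Omega|^{\lambda}$, I would bound the ratio $|B_\gamma\cap\widetilde{\Omega}|/|B_\gamma\cap\Omega|$ from above by using $|B_\gamma\cap\widetilde{\Omega}|\le|B_\gamma|=2\omega_{N-1}r^{N_\gamma}$ in the numerator and the lower bound $|B_\gamma\cap\Omega|\ge c\,r^{N_\gamma}$ from \eqref{lemmacusp1} in the denominator; the factors $r^{N_\gamma}$ cancel, so the ratio is at most $2\omega_{N-1}/c$, a constant depending only on $N$ and $c$. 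Combining the two steps yields
$$
\frac{1}{|B_\gamma\cap\Omega|^{\lambda}}\int_{B_\gamma\cap\Omega}\bigl|f-f_{B_\gamma\cap\Omega}\bigr|^p\,dy\le 2^p\Bigl(\frac{2\omega_{N-1}}{c}\Bigr)^{\lambda}|f|^p_{\mathcal{L}^{\lambda}_{p,\gamma}(\widetilde{\Omega})},
$$
and taking the supremum over all admissible $x$ and $r$, then the $p$-th root, gives the claim with $C=2\bigl(2\omega_{N-1}/c\bigr)^{\lambda/p}$, which depends only on $N,p,\lambda$ and $c$ as required.

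I do not expect a serious obstacle: the argument is a routine localization. The only place the hypothesis is truly needed is the measure comparison, since without a lower bound of the form \eqref{lemmacusp1} on $|B_\gamma\cap\Omega|$ the ratio $|B_\gamma\cap\widetilde{\Omega}|/|B_\gamma\cap\Omega|$ could blow up as $r\to 0$ for centers $x$ approaching a cusp of $\partial\Omega$; condition \eqref{lemmacusp1} is precisely what rules this out. Note that $\widetilde{\Omega}$ itself need not satisfy any regularity assumption, which is convenient for the applications in Lemma~\ref{equivalence} and Lemma~\ref{product}.
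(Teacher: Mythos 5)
Your proof is correct, and it reaches the estimate by a genuinely different route than the paper, although both rest on the same single use of condition \eqref{lemmacusp1} for the measure comparison. You work with the seminorm \eqref{semi1} directly and shift the mean: by the quasi-minimality of averages, $\dashint_A|f-f_A|^p\le 2^p\dashint_A|f-k|^p$ for any constant $k$, applied with $k=f_{B_\gamma\cap\widetilde{\Omega}}$, followed by enlarging the domain of integration; then \eqref{lemmacusp1} enters exactly once, to bound the ratio $|B_\gamma\cap\widetilde{\Omega}|^{\lambda}/|B_\gamma\cap\Omega|^{\lambda}$ by $(2\omega_{N-1}/c)^{\lambda}$ uniformly in $r$, which handles all $\lambda>0$ at once and produces the explicit constant $C=2(2\omega_{N-1}/c)^{\lambda/p}$. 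The paper instead routes through the symmetric double-integral seminorm of Lemma~\ref{normeeq}, where no mean needs to be shifted (enlarging the domain of integration inside $\dashint\dashint|f(y)-f(z)|^p$ is immediate), but the leftover normalizing factor $|B_\gamma\cap\Omega|^{1-\lambda}$ must then be compared with $|B_\gamma\cap\widetilde{\Omega}|^{1-\lambda}$, and the direction of that comparison depends on the sign of $1-\lambda$: for $\lambda\le 1$ it is free, while for $\lambda>1$ the paper invokes \eqref{lemmacusp1} a second time. So your argument avoids the paper's case split and gives an explicit constant, whereas the paper's version reuses its workhorse Lemma~\ref{normeeq} and stays in the same symmetric form used in the extension proofs of Section~3. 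Your closing observations are also accurate: admissibility of $B_\gamma$ for the $\widetilde{\Omega}$-seminorm is immediate from $\delta_\gamma(\Omega)\le\delta_\gamma(\widetilde{\Omega})$, and no regularity whatsoever is needed for $\widetilde{\Omega}$. One cosmetic point: you use the letter $c$ both for the constant in \eqref{lemmacusp1} and for the arbitrary constant in the mean-shifting step; rename one of the two to avoid a clash.
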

\begin{proof}
For any ball $B_{\gamma}$ with center in $\Omega$ and radius not exceeding the diameter of $\Omega$ we have
\begin{equation}\label{eq_restriction}
\begin{split}
 & |B_\gamma \cap \Omega|^{1-\lambda}\dashint_{B_\gamma \cap\Omega}\dashint_{B_\gamma \cap\Omega}\left|f(y)-f(z)\right|^pdydz \\
& \le  |B_\gamma \cap \Omega|^{1-\lambda}\frac{1}{c^2r^{2N_{\gamma}} }   \int_{B_\gamma\cap \widetilde{\Omega}}\int_{B_\gamma\cap \widetilde{\Omega}}\left|f(y)-f(z)\right|^p dydz   \\
& \le  |B_\gamma\cap \Omega|^{1-\lambda} \frac{C}{ \left|B_\gamma\right|^2}   \int_{B_\gamma\cap \widetilde{\Omega}}\int_{B_\gamma\cap \widetilde{\Omega}}\left|f(y)-f(z)\right|^p dydz   \\
& 
\le C |B_\gamma\cap \Omega|^{1-\lambda} \dashint_{B_\gamma\cap \widetilde{\Omega}} \dashint_{B_\gamma\cap \widetilde{\Omega}}\left|f(y)-f(z)\right|^p dydz  ,
\end{split}
\end{equation}
for some positive constant $C$.  We distinguish two cases. 
If $\lambda\le 1$  from \eqref{eq_restriction} we immediatly get 
that 
\begin{equation}\label{eq_restriction_bis}
\begin{split}
 & |B_\gamma\cap \Omega|^{1-\lambda}\dashint_{B_\gamma\cap\Omega}\dashint_{B_\gamma\cap\Omega}\left|f(y)-f(z)\right|^pdydz \\
& \le  |B_\gamma\cap \widetilde{\Omega}|^{1-\lambda}\dashint_{B_\gamma\cap \widetilde{\Omega}}\dashint_{B_\gamma\cap \widetilde{\Omega}}\left|f(y)-f(z)\right|^p dydz. 
\end{split}
\end{equation}
If $\lambda>1$,  again by \eqref{lemmacusp1}
\begin{equation}\label{eq_restriction_tris}
\begin{split}
 & |B_\gamma\cap \Omega|^{1-\lambda}\dashint_{B_\gamma\cap\Omega}\dashint_{B_\gamma\cap\Omega}\left|f(y)-f(z)\right|^pdydz \\
 & \le (c r^{N_{\gamma}})^{-1-\lambda}\int_{B_\gamma\cap\Omega}\int_{B_\gamma\cap\Omega}\left|f(y)-f(z)\right|^pdydz \\
 &\le C  | B_\gamma |^{-1-\lambda}\int_{B_\gamma\cap\Omega}\int_{B_\gamma\cap\Omega}\left|f(y)-f(z)\right|^pdydz \\
& \le  C |B_\gamma\cap \widetilde{\Omega}|^{-1-\lambda} \int_{B_\gamma\cap \widetilde{\Omega}}\int_{B_\gamma\cap \widetilde{\Omega}}\left|f(y)-f(z)\right|^p dydz\\
& \le  C |B_\gamma\cap \widetilde{\Omega}|^{1-\lambda} \dashint_{B_\gamma\cap \widetilde{\Omega}}\dashint_{B_\gamma\cap \widetilde{\Omega}}\left|f(y)-f(z)\right|^p dydz,
\end{split}
\end{equation}
for some positive constant $C$. The thesis follows from Lemma \ref{normeeq}.
\end{proof}

\section{Extension theorems }

In this section we prove a number of extension results starting with some observations concerning the extension by zero.

\subsection{Extension by zero}

The extension by zero requires some attention in the case of $\bmo$ spaces, as the following classical example shows. 

\example \label{log_0}  Let $\Omega =\left\{x\in \R: x> 0 \right\}.$ It is known and easy to see that $f(x)=\log x \in \bmo(\Omega )$. However,   $f_0\notin \bmo(\R)$. Indeed,  let's consider the family of intervals  centred at zero with radius $r$.  Then,  
$$\frac{1}{2r}\int_{-r}^{r} f_0(x)dx=\frac{1}{2r}\int_{0}^{r}\log x\, dx     =\frac{\log r-1}{2}. $$
Moreover, it's clear that 
$$\frac{1}{2r}\int_{-r}^{r}\left|f_0(x)-\frac{\log r-1}{2}\right|dx\ge  \frac{1}{2r}\int_{-r}^{0}\left|f_0(x)-\frac{\log r-1}{2}\right|dx =     \left|\frac{\log r -1}{4}\right| $$
which diverges as $r$ goes to zero and  to infinity. \\

The previous example also points out   that taking balls $B$ in \eqref{clasintro}  with center in $\Omega$ is not equivalent to taking all balls, possibly centred outside $\Omega$.   
 
 If the (essential) support ${\rm supp } f$ of $f$ is compact  then the extension by zero  of $f$ is admissible. 
 
\begin{lemma} \label{compactlem}Let $\Omega$ be a domain in $\R^N$ and $f\in  \mathcal{L}^{\lambda}_{p,\gamma}(\Omega) $. If ${\rm supp } f$ is compact in $\Omega$ then 
$f_0\in  \mathcal{L}^{\lambda}_{p,\gamma}(\R^N)$ and there exists a positive constant $C$ depending only on $N,p, \lambda, \gamma$ and   $\delta_{\gamma}( {\rm supp } f, \partial \Omega  )$  such that 
$$
| f_0 |_{ \mathcal{L}^{\lambda}_{p,\gamma}(\R^N)}\le C \| f \|_{ \mathcal{L}^{\lambda}_{p,\gamma}(\Omega)}\, .
$$
\end{lemma}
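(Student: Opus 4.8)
The plan is to estimate the Campanato seminorm of $f_0$ over the family of all $\delta_\gamma$-balls of $\R^N$, splitting this family according to the position of each ball relative to $K:=\operatorname{supp}f$ and to its radius. Set $d>0$ to be the $\delta_\gamma$-distance between $K$ and $\partial\Omega$; this is strictly positive because $K$ is a compact subset of the open set $\Omega$ (if $\Omega=\R^N$ there is nothing to prove, since $f_0=f$). The geometric fact underlying the whole argument is that the anisotropic balls are convex: indeed $B_\gamma(x,r)=\{y:\ |\bar y-\bar x|<r^{1/\gamma}\}\times\{y_N:\ |y_N-x_N|<r\}$ is the product of a Euclidean ball and an interval, hence in particular connected.

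First I would record the consequence of the triangle inequality for $\delta_\gamma$ together with connectedness: \emph{if $B=B_\gamma(x,r)$ meets $K$ and $r<d/2$, then $B\subset\Omega$}. Picking $z\in B\cap K$ one has $B\subset B_\gamma(z,2r)$, and every $y\in B_\gamma(z,2r)$ satisfies $\delta_\gamma(y,w)\ge\delta_\gamma(z,w)-\delta_\gamma(z,y)>d-2r>0$ for all $w\in\partial\Omega$; since $B_\gamma(z,2r)$ is connected and contains the interior point $z\in\Omega$ while avoiding $\partial\Omega$, it must lie inside $\Omega$. Note that mere positivity of $\delta_\gamma(y,\partial\Omega)$ would not force $y\in\Omega$, so connectedness is genuinely used here.

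Then I would run the case analysis on a generic ball $B=B_\gamma(x,r)$ of $\R^N$, recalling $|B_\gamma(x,r)|=2\omega_{N-1}r^{N_\gamma}$. If $B\cap K=\emptyset$ then $f_0\equiv0$ on $B$ and the oscillation integral vanishes. If $B\cap K\ne\emptyset$ and $r<d/2$, the previous step gives $B\subset\Omega$, so $B\cap\Omega=B$ and its center $x$ lies in $\Omega$; hence
$$\frac{1}{|B|^\lambda}\int_{B}\Bigl|f_0(y)-(f_0)_{B}\Bigr|^p\,dy=\frac{1}{|B\cap\Omega|^\lambda}\int_{B\cap\Omega}\bigl|f(y)-f_{B\cap\Omega}\bigr|^p\,dy\le |f|^p_{\mathcal{L}^\lambda_{p,\gamma}(\Omega)}$$
directly from the definition of the seminorm. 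Finally, if $B\cap K\ne\emptyset$ and $r\ge d/2$, I would discard the mean and use the elementary bound $\int_B|f_0-(f_0)_B|^p\le 2^p\int_B|f_0|^p\le 2^p\|f\|_{L^p(\Omega)}^p$ together with $|B|^\lambda\ge\bigl(2\omega_{N-1}(d/2)^{N_\gamma}\bigr)^\lambda$, obtaining $|B|^{-\lambda}\int_B|f_0-(f_0)_B|^p\le C(N,\lambda,\gamma,d)\,\|f\|^p_{L^p(\Omega)}$.

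Taking the supremum over all balls and a $p$-th root, the three estimates combine into $|f_0|_{\mathcal{L}^\lambda_{p,\gamma}(\R^N)}\le C\bigl(|f|_{\mathcal{L}^\lambda_{p,\gamma}(\Omega)}+\|f\|_{L^p(\Omega)}\bigr)\le C\|f\|_{\mathcal{L}^\lambda_{p,\gamma}(\Omega)}$, with $C$ depending only on $N,p,\lambda,\gamma$ and $d$, as required. The only delicate point is the containment $B\subset\Omega$ for small balls meeting $K$, where convexity of the $\delta_\gamma$-balls is essential; the large-ball case is harmless precisely because $\lambda>0$ makes $|B|^{-\lambda}$ small while the numerator stays controlled by $\|f\|_{L^p(\Omega)}^p$.
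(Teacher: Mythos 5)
Your proof is correct and follows essentially the same strategy as the paper's: split the family of balls by radius and by position relative to ${\rm supp}\, f$, bound the contribution of large balls crudely by $\|f\|_{L^p(\Omega)}^p$ over a denominator bounded below, observe that small balls missing the support contribute nothing, and show that small balls meeting the support are contained in $\Omega$ so that the seminorm of $f$ on $\Omega$ applies directly. The only difference is presentational: you supply, via convexity/connectedness of the $\delta_\gamma$-balls, the justification of the containment $B_\gamma\subset\Omega$ that the paper asserts without proof.
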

 
 \begin{proof}
Let $\bar r\in ]0, \delta_{\gamma}({\rm supp } f, \partial \Omega)  /4[$   be fixed. If $r\geq \bar r$ then for any ball $B_{\gamma}$ with radius $r$ we have
\begin{equation}\label{r_big_bis}
 \frac{1}{|B_\gamma|^\lambda}     \int_{B_\gamma }|f(y)-f_{B_\gamma }|^pdy 
 \le \frac{2^{p}\left\| f\right\|^p_{L^{p}(\Omega)}}{  (2\omega_{N-1}\bar r^{N_{\gamma}}  )   ^{\lambda}} \, .
\end{equation}
 Assume now that  $0<r<\bar r $. Let  $B_{\gamma}$ be a ball  with radius $r$. Let $U=\cup_{x\in {\rm supp} f}B_{\gamma}(x,2\bar r) $.  If  $B_{\gamma}$  is centred outside $U$ then $ \int_{B_\gamma }|f(y)-f_{B_\gamma }|^pdy =0$. If  $B_{\gamma}$ is centred in $U$ then $B_{\gamma}\subset \Omega $ hence 
$$  \frac{1}{|B_\gamma|^\lambda}     \int_{B_\gamma }|f(y)-f_{B_\gamma }|^pdy \le |f|_{ \mathcal{L}^{\lambda}_{p,\gamma}(\Omega) }
$$
and the proof is complete. 
\end{proof}
\indent The situation is simpler if we deal with Morrey spaces.  The following result indeed shows that in this case the extension by zero is always  possible.
\begin{lemma}\label{extensionbyzero}
Let $\Omega$ be a domain in $\R^N$ and $f\in L^{\lambda}_{p,\gamma}(\Omega)$.
 Then $f_0\in L^{\lambda}_{p,\gamma}(\R^N)$ and  there exists a positive constant $C$ independent of $f$ such that 
\begin{equation}\label{morrey00}\|f_0\|_{L^{\lambda}_{p,\gamma}(\R^N)}\le C\|f\|_{L^{\lambda}_{p,\gamma}(\Omega)}. 
\end{equation}
\end{lemma}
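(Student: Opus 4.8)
The plan is to reduce every ball in $\R^N$ appearing in the Morrey seminorm of $f_0$ to a ball centred in $\Omega$, so that the definition \eqref{normmorrey} of $\|f\|_{L^\lambda_{p,\gamma}(\Omega)}$ can be applied directly. Fix $x\in\R^N$ and $r\in\,]0,\infty[$ and write $B=B_\gamma(x,r)$. Since $f_0$ vanishes outside $\Omega$, we have $\int_B|f_0|^p=\int_{B\cap\Omega}|f|^p$. If $B\cap\Omega=\emptyset$ this is zero and there is nothing to prove, so I would assume $B\cap\Omega\neq\emptyset$ and choose a point $x_0\in B\cap\Omega\subset\Omega$.

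The next step is to recentre the ball at $x_0$. Because $\delta_\gamma$ is a genuine metric (the triangle inequality follows from the subadditivity of $t\mapsto t^\gamma$ on $[0,\infty[$ for $\gamma\le 1$, together with the fact that a maximum of two metrics is a metric), the inclusion $x_0\in B$ yields $B\subset B_\gamma(x_0,2r)$. Setting $\rho=\min\{2r,\delta_\gamma(\Omega)\}$, one checks that $B\cap\Omega\subset B_\gamma(x_0,\rho)\cap\Omega$ up to a set of measure zero: if $\rho=2r$ this is immediate, while if $\rho=\delta_\gamma(\Omega)<2r$ it follows since every $y\in\Omega$ satisfies $\delta_\gamma(x_0,y)\le\delta_\gamma(\Omega)$, the excluded sphere $\{y:\delta_\gamma(x_0,y)=\delta_\gamma(\Omega)\}$ being negligible. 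Since $x_0\in\Omega$ and $\rho\le\delta_\gamma(\Omega)$, the ball $B_\gamma(x_0,\rho)$ is admissible in \eqref{normmorrey}, whence
\[
\frac{1}{|B|^\lambda}\int_{B\cap\Omega}|f|^p
\le\frac{|B_\gamma(x_0,\rho)\cap\Omega|^\lambda}{|B|^\lambda}\,\|f\|_{L^\lambda_{p,\gamma}(\Omega)}^p .
\]

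It remains to bound the measure ratio, and this is the point where the Morrey case is genuinely easier than the Campanato one: I only need the trivial upper bound $|B_\gamma(x_0,\rho)\cap\Omega|\le|B_\gamma(x_0,\rho)|$, and no lower bound of type \eqref{lemmacusp1} (Property (A)) is required. Using $\rho\le 2r$ and the explicit measure $|B_\gamma(x_0,\rho)|=2\omega_{N-1}\rho^{N_\gamma}$, I obtain $|B_\gamma(x_0,\rho)\cap\Omega|\le 2^{N_\gamma}|B|$, so the ratio above is at most $2^{N_\gamma\lambda}$ (here $\lambda>0$ is used to preserve the inequality after raising to the power $\lambda$). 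Taking the supremum over all $x$ and $r$ and then the $p$-th root yields \eqref{morrey00} with $C=2^{N_\gamma\lambda/p}$. The only part needing care is unbounded $\Omega$, where $\delta_\gamma(\Omega)=\infty$ and the radius is never capped, but there the single case $\rho=2r$ applies verbatim; the one genuine obstacle, namely that balls may be centred \emph{outside} $\Omega$ and hence escape the definition of the norm on $\Omega$, is precisely what the recentring step resolves.
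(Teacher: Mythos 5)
Your proof is correct and follows essentially the same route as the paper's: recentre the ball at a point $x_0\in B_\gamma(x,r)\cap\Omega$, pass to the ball of doubled radius, and use the scaling $|B_\gamma(x_0,2r)|=2^{N_\gamma}|B_\gamma(x,r)|$ together with the trivial bound $|B_\gamma\cap\Omega|\le|B_\gamma|$. The only difference is cosmetic but in your favour: by capping the radius at $\rho=\min\{2r,\delta_\gamma(\Omega)\}$ (and noting the sphere $\{\delta_\gamma(x_0,\cdot)=\delta_\gamma(\Omega)\}$ is null) you treat bounded and unbounded $\Omega$ in one stroke with the dimensional constant $2^{N_\gamma\lambda/p}$, whereas the paper splits into cases and, for bounded $\Omega$ and large radii, uses a cruder estimate whose constant depends on $\Omega$.
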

\begin{proof}
Assume first that  $\Omega$ is unbounded. 
Let $B_\gamma=B_\gamma(x,r) $ be a fixed ball in $\R^N$ with radius $r\in ]0,\infty [$. If $x\in \Omega$ then we immediately have that 
\begin{equation}\label{morrey01}
\begin{split}
\frac{1}{|B_\gamma|^{\lambda}}\int_{B_\gamma}|f_0(y)|^{p} dy&  =\frac{1}{|B_\gamma|^{\lambda}}\int_{B_\gamma\cap \Omega}|f(y)|^{p} dy \\ &  \le \frac{1}{|B_\gamma\cap \Omega|^{\lambda}}\int_{B_\gamma\cap \Omega}|f(y)|^{p} dy
 \le \|f\|^p_{L^{\lambda}_{p,\gamma}(\Omega)}.
\end{split}
\end{equation}
Assume now that $x\in \R^N\setminus \Omega$. The analysis of the case  $B_{\gamma}\cap \Omega= \emptyset$ is trivial since in this case  $f_0$ vanishes on $B_{\gamma}$. Thus, we assume that $B_{\gamma}\cap \Omega \ne \emptyset$ and we fix $z\in B_{\gamma}\cap \Omega$. We note that 
$B_{\gamma }\subset B_{\gamma}(z,2r) $ and that  $|B_{\gamma}(z,2r) | = 2^{N_{\gamma}} |B_{\gamma}| $, hence 

\begin{equation}\label{morrey02}
\begin{split}
\frac{1}{|B_\gamma|^{\lambda}}\int_{B_\gamma}|f_0(y)|^{p} dy  =\frac{1}{|B_\gamma|^{\lambda}}\int_{B_\gamma\cap \Omega}|f(y)|^{p} dy = \frac{2^{\lambda N_\gamma}}{|B_\gamma(z,2r)|^{\lambda}}\int_{B_\gamma \cap \Omega}|f(y)|^pdy\\
 \le \frac{2^{\lambda N_\gamma}}{|B_\gamma(z,2r)\cap \Omega|^{\lambda}}\int_{B_\gamma(z,2r)\cap \Omega}|f(y)|^pdy\le 2^{\lambda N_\gamma} \|f\|^p_{L^{\lambda}_{p,\gamma}(\Omega)}.
\end{split}
\end{equation}
By \eqref{morrey01} and \eqref{morrey02} we deduce the validity of \eqref{morrey00} when $\Omega$ is unbounded.

Assume now that $\Omega$ is bounded. In this case if the radius $r$ of the ball $B_{\gamma}(x,r)$ satisfies the condition $r\le \delta_{\gamma}(\Omega )   /2$, then inequalities \eqref{morrey01} and \eqref{morrey02} holds because the balls of radii $r$ and $2r$  are  admissible in the definition  of the Morrey norm in $\Omega$. For  balls $B_{\gamma}(x,r)$ with  radius $r> \delta_{\gamma }(\Omega)$ the estimate of the ratio in \eqref{normmorrey} is straightforward since 
the measure at the denominator is uniformly bounded away from  zero and the $L^p$-norm at the numerator is estimated by the norm in $L^p(\Omega)$ of $f$ which, in turn, is estimated by the Morrey norm (recall that now we are considering the case when $\Omega$ is bounded).  
\end{proof}

\subsection{Extension for elementary $C^{0,\gamma}$ domains}

In this section we consider  elementary domains  of class $C^{0,\gamma}$, which means that they are  defined as subgraphs of  H\"older continuous  functions   with exponent $\gamma\in ]0,1]$. Namely, we consider  open sets    $\Omega$  in ${\mathbb R}^N$, $N\geq 2$,   in the form 
  \begin{equation}\label{eldom}\Omega = \{x=(\overline x, x_N)\in {\mathbb R}^N:\ \bar x\in \R^{N-1}\  x_N<\varphi(\overline x) \}\,,
\end{equation}
where $\varphi :  \R^{N-1}\to {\mathbb R}$  is a function satisfying the condition 
\begin{equation}\label{lip1}
|\varphi(\overline x)-\varphi(\overline y)| \le M|\overline x-\overline y|^{\gamma} 
\end{equation}
for all $ \overline x, \overline y\in {\mathbb R}^{N-1}$ and 
 some positive constant $M$. 
The best constant $M$ in inequality (\ref{lip1}) is denoted by ${\rm Lip}_{\gamma} \varphi$.
Recall that the case $\gamma=1$ corresponds to the Lipschitz case.  

We recall that an elementary open set as above satisfies the cusp condition which is a generalisation of the usual cone condition. Namely, a cusp  with exponent $\gamma$, 
 vertex $x\in \R^N$,  and opening $M$ is defined by 
\begin{equation} 
C_{\gamma}(x, M)=\{y\in {\mathbb R}^N:\    y_N<  x_N-M|\bar y -\bar x|^{\gamma}  \}
\end{equation}
and it turns out  that if the function $\varphi$ defining $\Omega$ in \eqref{eldom} satisfies \eqref{lip1} then 
$C_{\gamma}(x, M )\subset  \Omega $ for all $x\in \overline\Omega$.  This implies the existence of a constant $c>0$ depending only on   $N,\gamma $ and $M$ 
such that  inequality \eqref{lemmacusp1} holds 
for all $x\in \overline{\Omega }$ and $r>0$. For a proof we refer to \cite{lamves}.  

Then we can prove the following extension result. 

\begin{teorema}\label{estensioneholder}  Let $\Omega$ be an elementary  domain in $\mathbb{R}^N$  of class $C^{0,\gamma}$ with  $\gamma\in ]0,1]$, as in \eqref{eldom}. 
Given $f\in \mathcal{L}^{\lambda}_{p,\gamma}(\Omega)$, the function $\tilde{f}$ defined by
$$\tilde{f}(\bar{x},x_N)=\begin{cases} f(\bar{x},x_N), & \mbox{if}\ x_N<\varphi(\bar{x}), \\ f(\bar{x},2\varphi(\bar{x})-x_N), & \mbox{if}\ x_N>\varphi(\bar{x})
\end{cases}$$
belongs to  $\mathcal{L}^{\lambda}_{p,\gamma}(\mathbb{R}^N)$ and there exits a positive constant $C$ depending only on $N, p, \lambda, \gamma$ and $M$ such that 
\begin{equation}\label{mainest}
|\tilde{f}|_{\mathcal{L}^{\lambda}_{p,\gamma}(\mathbb{R}^N)}\le  C |f|_{\mathcal{L}^{\lambda}_{p,\gamma}(\Omega)}.
\end{equation}
\end{teorema}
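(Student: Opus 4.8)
The plan is to exploit the reflection map $R(\bar x, x_N) = (\bar x,\, 2\varphi(\bar x) - x_N)$, which is an involution of $\R^N$ interchanging $\Omega$ and $\R^N\setminus\overline{\Omega}$ and fixing the boundary $\{x_N=\varphi(\bar x)\}$, so that $\tilde f = f$ on $\Omega$ and $\tilde f = f\circ R$ on $\R^N\setminus\overline{\Omega}$. Two elementary properties of $R$ drive the whole argument. First, $R$ is measure preserving: for each fixed $\bar x$ the map $x_N\mapsto 2\varphi(\bar x)-x_N$ is an affine reflection, so Fubini gives $|R(E)|=|E|$ for every measurable $E$ (note that no differentiability of $\varphi$ is required). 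Second, and crucially, $R$ is bi-Lipschitz with respect to $\delta_\gamma$: since the horizontal part is unchanged and, by \eqref{lip1},
\[
|R(y)_N - R(z)_N| \le 2|\varphi(\bar y)-\varphi(\bar z)| + |y_N-z_N| \le (2M+1)\,\delta_\gamma(y,z),
\]
one obtains $\delta_\gamma(R(y),R(z))\le (1+2M)\,\delta_\gamma(y,z)$, and the reverse inequality follows from $R\circ R=\mathrm{id}$. This is exactly where the anisotropic metric does its job: the H\"older exponent $\gamma$ of $\varphi$ matches the exponent in $\delta_\gamma$, so the term $\varphi(\bar x)$ introduced by the reflection is absorbed without loss. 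With the Euclidean metric the normal component of $R$ would only be $\gamma$-H\"older, producing precisely the deterioration described in the Introduction.

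I would estimate the seminorm of $\tilde f$ through the equivalent double-integral seminorm $\left|\cdot\right|^{(1)}$ of Lemma~\ref{normeeq}, which avoids having to track how the mean $\tilde f_{B_\gamma}$ transforms. Fix a ball $B_\gamma=B_\gamma(x_0,r)$ in $\R^N$ (recall that both $\Omega$ and $\R^N$ have infinite $\delta_\gamma$-diameter, so every radius is admissible for the seminorm) and set $B^-=B_\gamma\cap\Omega$ and $B^+=B_\gamma\setminus\overline{\Omega}$. I would split
\[
\int_{B_\gamma}\int_{B_\gamma} |\tilde f(y)-\tilde f(z)|^p\,dy\,dz
\]
into the four contributions over $B^{\pm}\times B^{\pm}$. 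On $B^-\times B^-$ the integrand is $|f(y)-f(z)|^p$; on $B^+\times B^+$ it equals $|f(R(y))-f(R(z))|^p$, which after the measure-preserving change of variables $u=R(y)$, $v=R(z)$ becomes $|f(u)-f(v)|^p$ integrated over $R(B^+)\times R(B^+)$; the two mixed terms are treated identically, reflecting only the $B^+$ variable.

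It then remains to collect all resulting integrals on a single ball of $\Omega$ of radius comparable to $r$. Since $R(B^+)=R(B_\gamma)\cap\Omega\subset B_\gamma(R(x_0),(1+2M)r)$ by the bi-Lipschitz bound, and since (when $B^-$ and $B^+$ both have positive measure) $B_\gamma$ meets $\partial\Omega$ and hence a boundary point fixed by $R$, the triangle inequality yields $\delta_\gamma(x_0,R(x_0))<(2+2M)r$; choosing a center $w_0\in B^-\subset\Omega$ then produces a ball $B^*=B_\gamma(w_0,\kappa r)$, with $\kappa=\kappa(M)$, centred in $\Omega$ and containing both $B^-$ and $R(B^+)$. (The degenerate cases where $B^-$ or $B^+$ is null are simpler and handled by the same recentering.) Each of the four pieces is thereby bounded by $\int_{B^*\cap\Omega}\int_{B^*\cap\Omega}|f(u)-f(v)|^p\,du\,dv\le |B^*\cap\Omega|^{\lambda+1}\bigl(|f|^{(1)}_{\mathcal{L}^{\lambda}_{p,\gamma}(\Omega)}\bigr)^p$. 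Using $|B^*\cap\Omega|\le|B^*|=\kappa^{N_\gamma}|B_\gamma|$ and dividing by $|B_\gamma|^{\lambda+1}$, the four terms sum to $C\bigl(|f|^{(1)}_{\mathcal{L}^{\lambda}_{p,\gamma}(\Omega)}\bigr)^p$ uniformly in $B_\gamma$. Taking the supremum over balls and applying Lemma~\ref{normeeq} twice gives \eqref{mainest} with $C$ depending only on $N,p,\lambda,\gamma,M$.

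The main obstacle is not any single estimate but the bi-Lipschitz property of $R$ in the $\delta_\gamma$-metric, together with the bookkeeping needed to recenter the reflected balls inside $\Omega$; once these are in place the proof reduces to the routine four-piece splitting. I expect the recentering step — ensuring the enlarged ball is genuinely centred in $\Omega$ with radius controlled independently of how deep $B_\gamma$ sits on either side of the boundary — to be the only delicate point, and it is resolved by the short argument using the boundary fixed points of $R$ sketched above.
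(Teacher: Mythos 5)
Your proof is correct and follows essentially the same route as the paper's: reflection across the graph, the equivalent double-integral seminorm of Lemma~\ref{normeeq}, the measure-preserving change of variables, and recentering the reflected sets inside a ball of comparable radius centred in $\Omega$ by exploiting a boundary point of $B_\gamma$ fixed by the reflection (the paper's condition \eqref{funzionale} is exactly your fixed-point device, and your factor $1+2M$ matches the paper's dilation constants). The only notable difference is bookkeeping: by working throughout with the exponent $\lambda+1$ on full balls and using only $|B^*\cap\Omega|\le|B^*|$, you bypass the paper's case distinction between $\lambda\le 1$ and $\lambda>1$ and its appeal to the cusp property \eqref{lemmacusp1} in the former case.
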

\begin{proof}  Let $B_{\gamma}(x,r)$ be a generic ball in $\R^N$ centred at  $x=(\bar{x},x_N)$ with radius $r$. We plan to estimate  the  double integral
$$\dashint_{B_{\gamma}(x,r)}\dashint_{B_{\gamma}(x,r)}|\tilde f(y)-\tilde f(z)|^p dydz.$$
We distinguish two cases:\\
\indent Case 1.  In this case we assume that  $B_{\gamma}(x,r)\subset\overline{\Omega^c}$. Let  $$D=\left\{(\bar{w},2\varphi(\bar{w})-w_N): (\bar{w},w_N)\in B_\gamma(x,r)\right\}$$ be the symmetric set  of $B_\gamma$ with respect to the graph of the function $\varphi$. By the definition of $B_{\gamma}(x,r)$ we have that  for all $(\bar{w},w_N)\in B_{\gamma}(x,r)$  
$$|\bar{w}-\bar{x}|<r^{\frac{1}{\gamma}}$$
and $$ |2\varphi(\bar{w})-w_N-(2\varphi(\bar{x})-x_N)|\le |x_N-w_N|+2|\varphi(\bar{x})-\varphi(\bar{w})| $$
$$\le r+2M|\bar{x}-\bar{w}|^{\gamma}<(1+2M)r.$$
 Hence 
 \begin{equation}
 D\subset B_{\gamma}(  \tilde x  ,\tilde{r})
 \end{equation}
 where $\tilde x=(\bar x, 2\varphi (\bar x)-x_N)$ and  $\tilde r= (1+2M)r$. 
 Note that 
 \begin{equation}\label{r_tilde}
|B_{\gamma}(\tilde x,\tilde{r})|=2\omega_{N-1}\left((1+2M)r\right)^{N_\gamma}=(1+2M)^{N_\gamma}|B_{\gamma}(x,r)| .
\end{equation}
By changing variables in integrals, we have 
\begin{multline} \label{r_tilde_due}
\dashint_{B_{\gamma}(x,r)}\dashint_{B_{\gamma}(x,r)}|   \tilde{f}(y)-\tilde{f}(z)|^pdydz=\frac{1}{|B_{\gamma}(x,r)|^2}\int_{D}\int_{D}|f(y)-f(z)|^pdydz \\
=\frac{(1+2M)^{2N_\gamma}}{|B_{\gamma}(\tilde x,\tilde{r})|^2}\int_{D}\int_{D}|f(y)-f(z)|^pdydz \\
\le (1+2M)^{2N_\gamma}\dashint_{B_{\gamma}(\tilde x,\tilde{r})\cap\Omega   }\dashint_{B_{\gamma}(x,\tilde{r})\cap\Omega   }|f(y)-f(z)|^pdydz .
%\le C(\Omega,N_\gamma)|f|_{\mathcal{L}^{N_\gamma}_{1,\gamma}(\Omega)}.
\end{multline}
Thus,  by \eqref{r_tilde} and \eqref{r_tilde_due}
\begin{multline}\label{case1}
|B_\gamma(x,r)|^{1-\lambda}\dashint_{B_{\gamma}(x,r)}\dashint_{B_{\gamma}(x,r)}|   \tilde{f}(y)-\tilde{f}(z)|^pdydz \\
 \le  |B_\gamma(\tilde{x},\tilde{r})|^{1-\lambda}(1+2M)^{N_\gamma(1+\lambda)}
 \dashint_{B_{\gamma}(\tilde x,\tilde{r})\cap\Omega   }\dashint_{B_{\gamma}(x,\tilde{r})\cap\Omega   }|f(y)-f(z)|^pdydz.
\end{multline}
In particular,  if $\lambda\le 1$ we use \eqref{lemmacusp1} to get 
\begin{equation}\label{case12}
\begin{split}
& |B_\gamma(x,r)|^{1-\lambda}\dashint_{B_{\gamma}(x,r)}\dashint_{B_{\gamma}(x,r)}|   \tilde{f}(y)-\tilde{f}(z)|^pdydz\\
& \le C(1+2M)^{N_\gamma(1+\lambda)}|B_\gamma(\tilde{x},\tilde{r})\cap \Omega|^{1-\lambda}
 \dashint_{B_{\gamma}(\tilde x,\tilde{r})\cap\Omega   }\dashint_{B_{\gamma}(x,\tilde{r})\cap\Omega   }|f(y)-f(z)|^pdydz\\
& \le C 2^{p}(1+2M)^{N_\gamma(1+\lambda)}|f|^p_{\mathcal{L}^{\lambda}_{p,\gamma}(\Omega)}.
\end{split}
\end{equation}
while if $\lambda>1$ we obtain 
\begin{eqnarray}\label{case13}\lefteqn{
|B_\gamma(x,r)|^{1-\lambda}\dashint_{B_{\gamma}(x,r)}\dashint_{B_{\gamma}(x,r)}|   \tilde{f}(y)-\tilde{f}(z)|^pdydz} \nonumber \\
& & 
\le 
|B_\gamma(x,r) \cap \Omega|^{1-\lambda}\dashint_{B_{\gamma}(x,r)}\dashint_{B_{\gamma}(x,r)}|   \tilde{f}(y)-\tilde{f}(z)|^pdydz\nonumber 
\\ & & 
\le 2^{p}(1+2M)^{N_\gamma(1+\lambda)}|f|^p_{\mathcal{L}^{\lambda}_{p,\gamma}(\Omega)}.
\end{eqnarray}
\indent Case 2. In this case we assume that  $B_{\gamma}(x,r)\cap\Omega\neq \emptyset$.  To shorten our notation, we set  $B_\gamma=B_{\gamma}(x,r)$. 
We write
\begin{multline} 
\frac{1}{|B_\gamma|^2}\int_{B_\gamma}\int_{B_\gamma}|\tilde{f}(y)-\tilde{f}(z)|^pdydz \\ =\frac{1}{|B_\gamma|^2}\int_{B_{\gamma}\cap\Omega}\int_{B_{\gamma}\cap\Omega}|f(y)-f(z)|^pdydz
+\frac{2}{|B_\gamma|^2}\int_{B_{\gamma}\cap\Omega}\int_{B_{\gamma}\cap\Omega^c}|\tilde{f}(y)-f(z)|^pdydz \\
+\frac{1}{|B_\gamma|^2}\int_{B_{\gamma}\cap\Omega^c}\int_{B_{\gamma}\cap\Omega^c}|\tilde{f}(y)-\tilde{f}(z)|^pdydz
=I_1+I_2+I_3
\end{multline}
where $$I_1=\frac{1}{|B_\gamma|^2}\int_{B_{\gamma}\cap\Omega}\int_{B_{\gamma}\cap\Omega}|f(y)-f(z)|^pdydz,$$
$$I_2= \frac{2}{|B_\gamma|^2}\int_{B_{\gamma}\cap\Omega}\int_{B_{\gamma}\cap\Omega^c}|\tilde{f}(y)-f(z)|^pdydz$$
and $$I_3=\frac{1}{|B_\gamma|^2}\int_{B_{\gamma}\cap\Omega^c}\int_{B_{\gamma}\cap\Omega^c}|\tilde{f}(y)-\tilde{f}(z)|^pdydz.$$

We look for a ball centred at some point of $\Omega$ containing both the symmetric set  of  $B_\gamma\cap\Omega^c$ with respect to the graph of $\varphi$ and $B_\gamma\cap\Omega$.
We can clearly restrict our attention to  the case $B_\gamma\cap \Omega^c \neq \emptyset$ otherwise the conclusion is trivial. Thus,  we fix a point  $(\bar{u},u_N)\in B_\gamma\cap\Omega^c$. Since $B_{\gamma}\cap \Omega \ne \emptyset$, we can choose $(\bar u, u_N)$ satisfying the condition
\begin{equation}\label{funzionale}
(\bar u, \varphi (\bar u))\in B_{\gamma}\, .
\end{equation}

Then we note that for all  $(\bar{w},w_N)\in B_\gamma\cap\Omega^c$  we have
$ |\bar{w}-\bar{u}|\le |\bar w- \bar x  |+|\bar x - \bar u|  < 2r^{\frac{1}{\gamma}}$
and
\begin{equation} |2\varphi(\bar{w})-w_N-2\varphi(\bar{u})+u_N|\le 2M|\bar{w}-\bar{u}|^{\gamma}+2r 
\le \bar r,
\end{equation}
where we have set $\bar r= 2^{\gamma+1}Mr+2r$.
Thus  $  B_\gamma(\left(\bar{u},2\varphi(\bar{u})-u_N),\bar{r}\right)\cap \Omega $  contains the symmetric of $B_{\gamma}\cap\Omega^c$. 
Moreover, for all $(\bar{w},w_N)\in B_{\gamma}\cap\Omega$ we have 
$|\bar{u}-\bar{w}|< 2r^{\frac{1}{\gamma}}$ and 
\begin{multline}
|2\varphi(\bar{u})-u_N-w_N|\le |\varphi(\bar{u})-u_N|+|\varphi(\bar{u})-w_N|\\
\le 2r+|\varphi(\bar{u})-\varphi(\bar{w})|+|\varphi(\bar{w})-w_N|\\
\le 4r+M|\bar{u}-\bar{w}|^{\gamma}\le    4r+2^{\gamma}Mr   \le 2\bar r,
\end{multline}
where we have used also condition \eqref{funzionale}. 
Therefore, $B_{\gamma}((\bar{u},2\varphi(\bar{u})-u_N),2\bar{r}))$ contains 
$B_{\gamma}\cap\Omega$.  Thus we set $\widetilde{B}_\gamma= B_{\gamma}((\bar{u},2\varphi(\bar{u})-u_N),2\bar{r}))$
and we observe that 

\begin{equation}\label{r_bar}
|\widetilde{B}_\gamma|=C  |B_\gamma |,\quad C=(4(1+2^{\gamma}M))^{N_{\gamma }}.
\end{equation}
By changing variables in integrals we have that 
\begin{multline}
I_1+I_2+I_3\le \frac{4}  {|{B}_\gamma|^2}\int_{\widetilde{B}_{\gamma}\cap\Omega}\int_{\widetilde{B}_{\gamma}\cap\Omega}|f(y)-f(z)|^pdydz\\
\le \frac{4C^2}{|\widetilde{B}_{\gamma}|^2}\int_{\widetilde{B}_{\gamma}\cap\Omega}\!\int_{\widetilde{B}_{\gamma}\cap\Omega}\!\!  |f(y)-f(z)|^pdydz
\le  \frac{4C^2}{|\widetilde{B}_{\gamma} \cap \Omega  |^2}\int_{\widetilde{B}_{\gamma}\cap\Omega}\!   \int_{\widetilde{B}_{\gamma}\cap\Omega}\!\!   |f(y)-f(z)|^pdydz \\
= 4C^2 \dashint_{\widetilde{B}_{\gamma}\cap\Omega}\dashint_{\widetilde{B}_{\gamma}\cap\Omega}|f(y)-f(z)|^pdydz  .
\end{multline}
Thus, by \eqref{r_bar} and by arguing as in the proof of inequalities \eqref{case12}, \eqref{case13} from \eqref{case1},  we get
\begin{multline}\label{case2}
|B_\gamma|^{1-\lambda}\dashint_{B_\gamma}\dashint_{B_\gamma}|\tilde{f}(y)-\tilde{f}(z)|^pdydz \le 
4C^2 |B_\gamma|^{1-\lambda} \dashint_{\widetilde{B}_{\gamma}\cap\Omega}\dashint_{\widetilde{B}_{\gamma}\cap\Omega}|f(y)-f(z)|^pdydz \\
\le  4C^{1+\lambda}|\widetilde{B}_\gamma|^{1-\lambda}\dashint_{\widetilde{B}_{\gamma}\cap\Omega}\dashint_{\widetilde{B}_{\gamma}\cap\Omega}|f(y)-f(z)|^pdydz
\le D |f|^{p}_{\mathcal{L}^{\lambda}_{p,\gamma}(\Omega)}
\end{multline}
for some positive constant $D$. 
In conclusion,  by combining inequalities \eqref{case12}, \eqref{case13}, \eqref{case2} and Lemma~\ref{normeeq} we deduce the validity of estimate \eqref{mainest}. 
\end{proof}

\subsection{Extension for sum of Campanato spaces}

In  this section we consider open sets $\Omega$  which can be locally described  near the boundary  by suitable  subgraphs of $C^{0,\gamma}$ functions.   This means that for any point of the boundary there exists a neighborhood $U$ and an isometry $R$ such that  $R(\Omega \cap U)$ is  an elementary domain of the form 
\begin{equation}\label{rotation}
\{(\bar x, x_N)\in \R^N:\ \bar x\in \R^{N-1},\ x_N\le \varphi (\bar x), \ \bar x \in W    \}
\end{equation}
where $W$ is an open set in $\R^{N-1}$ and $\varphi$ a real-valued function of class $C^{0,\gamma}$ on $W$, which means that inequality \eqref{lip1} is satisfied for all $\bar x, \bar y \in W$.  See Figure~\ref{holder_patchy}. As we have explained before, in the case of an elementary domain of class $C^{0,\gamma}$ described by variables $(\bar x, x_N)$ as in  \eqref{rotation}, the natural metric in the definition of the Campanato spaces is adapted to the direction  $x_N$.  Thus, since we have to use more rotations $R$ to describe different portions of the boundary we are forced to look 
at functions defined in $\Omega$ as sums of functions which belong to Campanato spaces associated with different metrics depending on the rotations $R$.  
This strategy can be applied not only to a bounded domain as above but to any domain.

Namely,  given an open set $\Omega$ in $\R^N$, $p\in [1,\infty [$, $\lambda \in ]0, \infty [$, $\gamma \in ]0,1 ]$, and an isometry $R$  of ${\mathbb{R}}^N$, we consider the space
\begin{equation}\label{elleerre}
 {{\mathcal L}^{\lambda, R }_{p, \gamma }(\Omega) }  =    \{   f\in L^p (\Omega ):\ 
    | f |_ { {\mathcal L}^{\lambda, R }_{p, \gamma }(\Omega)} \ne \infty   \} 
\end{equation}
where the seminorm  $| f |_{{\mathcal L}^{\lambda,R }_{p, \gamma }(\Omega) } $ is defined by 
\begin{eqnarray}\label{camp_i_norm}\lefteqn{
| f |_{{\mathcal L}^{\lambda,R }_{p, \gamma }(\Omega) } :=\sup_{x\in R (\Omega)}\sup_{r\in ]0, \delta_{\gamma}(R(\Omega))[ }\left(\frac{1}{|B_{\gamma}(x, r)\cap R (\Omega)|^{\lambda}  }\int_{B_{\gamma}(x, r)\cap R (\Omega)}\bigg| f \circ R^{-1}(y) \right. } \nonumber \\  & &  \qquad\qquad\qquad  \qquad\qquad \qquad\qquad\qquad  \left.  
-\dashint_{B_{\gamma}(x, r)\cap R (\Omega)}   f\circ R^{-1}(z)dz  \bigg|^pdy \right)^{\frac{1}{p}}  .
\end{eqnarray}
%It is natural to endow the space  ${{\mathcal L}^{\lambda, R }_{p, \gamma }(\Omega) }  $  with the norm 
 %$$
  %\|  f\|_{\mathcal{L}^{\lambda , R}_{p, \gamma }(\Omega) }= \|f\|_{L^p(\Omega)}  + | f |_{{\mathcal L}^{\lambda,R }_{p, \gamma }(\Omega) } 
 %$$

Note that since in this section we are mainly interested in the case of functions with support contained in bounded sets, for simplicity we have directly assumed that functions in \eqref{elleerre} belong to $L^p(\Omega)$. 

Now,  given a finite set ${\mathcal{R}}=\{R_k  \}_{k=1}^s$ of isometries $R_k$, we consider the space 
\begin{equation}\label{set_patchy}
    \mathcal{L}^{\lambda ,\mathcal R}_{p, \gamma }(\Omega):=\left\{f\in L^{p}(\Omega): f=\sum_{k=1}^{s}f_k,\
     f_k \in   {{\mathcal L}^{\lambda, R_k }_{p, \gamma }(\Omega) }    
\right\} 
\end{equation}
endowed  with the norm 
 \begin{equation}\label{norma_banach}
 \|  f\|_{\mathcal{L}^{\lambda ,\mathcal R}_{p, \gamma }(\Omega) }
 =\inf_{f=\sum_{k=1}^sf_k}  \left\{ \sum_{k=1}^{s}
   \| f_k \|_{ {\mathcal L}^{\lambda, R_k }_{p, \gamma }(\Omega) }     \right\} 
 \end{equation}
 where $   \| f_k \|_{ {\mathcal L}^{\lambda, R_k }_{p, \gamma }(\Omega) }   =  \|f_k  \|_{L^p(\Omega)} + | f_k |_{ {\mathcal L}^{\lambda, R_k }_{p, \gamma }(\Omega) }      $.
 
The above definitions can be specialised in the case of general open sets of class  $C^{0,\gamma}$ defined below.  Here, by cuboid we mean the isometric image of an $N$-dimensional parallelepiped with sides parallel to the coordinate axes.  Moreover, given a set $C$ in ${\mathbb R}^N$ and $\delta >0$ we denote by $C_{\delta}$ the set $\{x\in C:{\rm dist} (x,\partial C)>\delta \}$.

\begin{definizione}\label{atlas} Let $\gamma\in ]0,1]$,  $\delta \in ]0,\infty [$,  $s\in {\mathbb N}$. Let ${\mathcal{V}}=\{V_{k}\}_{k=1}^s$ be a family of cuboids and  ${\mathcal{R}}=\{R_k \}_{k=1}^s$ a corresponding family of isometries in ${\mathbb{R}}^N$ such that   for every $k=1,\dots , s$  we have 
$$
R_k( V_k  )= \Pi_{i=1}^N]a_{i,k}, b_{i,k}[
$$ 
where $a_{N,k}<a_{N,k}+\delta <  b_{N,k}-\delta <  b_{N,k}$.  We say that ${\mathcal{A} }=(s,\delta , {\mathcal{V}}, {\mathcal{R}} )$ is an {\em atlas}.  

Let $M\in  [0, \infty [$. We say that an open set $\Omega$ in ${\mathbb R}^N$ is of class $C^{0,\gamma}_M({\mathcal{A}})$  if the following conditions are satisfied:\\
 
(i) For every  $k=1,\dots , s$, we have $\Omega \cap (V_k)_{\delta }\ne \emptyset$. \\

(ii) $\Omega \subset \cup_{k=1}^{s}(V_k)_{\delta }$.\\

(iii) For every  $k=1,\dots ,s$, the set $\Omega_k:=\Omega \cap V_j$  satisfies the following condition: either $R_k(\Omega_k)= \Pi_{i=1}^N]a_{i,k}, b_{i,k}[ $ (in which case $V_k\subset \Omega $),  or $R_k(\Omega_k)$ is a bounded  elementary  domain of class $C^{0,\gamma}$
  of the form
\begin{equation*}\label{ele1bis}
R_k(\Omega_k)=\left\{x\in {\mathbb R}^N:\ \bar x\in W_k,\ a_{N,k}<x_N<\varphi_k(\bar x) \right\}
\end{equation*}
where  $\varphi_k$ is a real-valued   function   defined on $W_k= \Pi_{i=1}^{N-1}]a_{i,k}, b_{i,k}[$ such that  \eqref{lip1} is satisfied for all $\bar x, \bar y\in W_k$ and 
$$
a_{N,k}+\delta<\varphi_k<     b_{N,k}-\delta  
$$ 
(in which case $V_k\cap \partial \Omega \ne \emptyset$).   We say that $\Omega_k$ is a {\em patch} of $\Omega$.

Finally, we say that an open set $\Omega$ in ${\mathbb R}^N$ is of class $C^{0,\gamma} ({\mathcal{A}})$ if it is of class  $C^{0,\gamma}_M({\mathcal{A}})$ for some $M$.
\end{definizione}

%In the sequel it will be convenient to represent    in the form \eqref{ele1bis}  also  the  patches $\Omega_k$ contained in $\Omega$ by setting $\varphi_k \equiv b_{n,k}$. \\

Let $\Omega$ be a bounded domain of class  $C^{0,\gamma}(\mathcal{A})$. Assume that $\Omega_k$ is a patch of $\Omega$ as in Definition~\ref{atlas}.  We consider the subspace 
 $$ { \widetilde{ {\mathcal  L} }}  ^{\lambda, R_k }_{p, \gamma }(\Omega)  $$ of  ${{\mathcal L}^{\lambda, R_k }_{p, \gamma }(\Omega) } $  defined by  those functions $f$ in  $ {{\mathcal L}^{\lambda, R_k }_{p, \gamma }(\Omega) } $ such that 
 \begin{equation}\label{support}
 {\rm supp} f \subset  \overline{( V_k )_{\delta /2}}  .
 \end{equation}
 Note that we are not requiring that the support of $f$ has positive distance from $\partial \Omega$. Finally, we consider the subspace $  \mathcal{\widetilde{L}}^{\lambda ,\mathcal R}_{p, \gamma }(\Omega)$ of $ \mathcal{L}^{\lambda ,\mathcal R}_{p, \gamma }(\Omega)$ defined by
 \begin{equation}\label{patchyspace}
 \left\{f\in L^{p}(\Omega): f=\sum_{k=1}^{s}f_k,\
     f_k \in   {\widetilde {\mathcal {L}}^{\lambda, R_k }_{p, \gamma  }(\Omega) }  \, 
\right\} .
\end{equation}
%  Note that the space $  \mathcal{\widetilde{L}}^{\lambda ,\mathcal R}_{p, \gamma }(\Omega)$ is closed in $ \mathcal{L}^{\lambda ,\mathcal R}_{p, \gamma }(\Omega)$ hence it is a  Banach space as well.\\

\indent Then we can prove the following

\begin{lemma}\label{ext_elementary_hold}   
Let $\Omega$ be a domain  in $\R^N$ of class $C^{0,\gamma}_M(\mathcal{A})$ where  ${\mathcal{A} }=(s,\delta , {\mathcal{V}}, {\mathcal{R}} )$ is an atlas in $\mathbb{R}^N$, and $M\in [0,\infty[$. Let   $ f =\sum_{k=1}^sf_k  \in  \mathcal{\widetilde{L}}^{\lambda ,\mathcal R}_{p, \gamma }(\Omega)$.  Then, there exists $F=\sum_{k=1}^sF_k$ belonging to    $ \mathcal{{L}}^{\lambda ,\mathcal R}_{p, \gamma }(\mathbb{R}^N)$ such that $F_{|\Omega}=f$. Moreover, 
\begin{equation}\label{ext_elementary_hold0}    
\sum_{k=1}^{s}
   \| F_k \|_{ {\mathcal L}^{\lambda, R_k }_{p, \gamma }(\R^N) }  \le C \sum_{k=1}^{s}
   \| f_k \|_{ {\mathcal L}^{\lambda, R_k }_{p, \gamma }(\Omega  ) }  
 \end{equation}
where $C$ is a positive constant  depending  only on $N, \mathcal{A},M, p,\lambda, \gamma$.
\end{lemma}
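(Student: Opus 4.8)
The plan is to extend each summand $f_k$ separately, producing $F_k$ with $\|F_k\|_{\mathcal{L}^{\lambda,R_k}_{p,\gamma}(\R^N)}\le C\|f_k\|_{\mathcal{L}^{\lambda,R_k}_{p,\gamma}(\Omega)}$, and then to set $F:=\sum_{k=1}^sF_k$; summing these bounds gives exactly \eqref{ext_elementary_hold0}. Fix $k$ and pass to the rotated coordinates by setting $g_k:=f_k\circ R_k^{-1}$, so that $g_k\in\mathcal{L}^{\lambda}_{p,\gamma}(R_k(\Omega))$ with $\|g_k\|_{\mathcal{L}^{\lambda}_{p,\gamma}(R_k(\Omega))}=\|f_k\|_{\mathcal{L}^{\lambda,R_k}_{p,\gamma}(\Omega)}$ directly from \eqref{camp_i_norm}, and ${\rm supp}\,g_k\subset K_k:=R_k(\overline{(V_k)_{\delta/2}})$, a compact subset of $R_k(V_k)$. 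If $V_k\subset\Omega$ (interior patch) then $K_k$ is compact in $R_k(\Omega)$ at positive $\delta_\gamma$-distance from $\partial R_k(\Omega)$, and Lemma~\ref{compactlem} immediately gives that the extension by zero $G_k:=(g_k)_0$ lies in $\mathcal{L}^\lambda_{p,\gamma}(\R^N)$ with the required bound. We may thus concentrate on a boundary patch, where $R_k(\Omega_k)=\{(\bar x,x_N):\bar x\in W_k,\ a_{N,k}<x_N<\varphi_k(\bar x)\}$.

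First I would complete the profile. Since $\varphi_k$ is $\gamma$-H\"older on $W_k$ with constant at most $M$ and $(\bar x,\bar y)\mapsto|\bar x-\bar y|^{\gamma}$ is a metric for $\gamma\in]0,1]$, a McShane-type extension yields $\tilde\varphi_k:\R^{N-1}\to\R$ with ${\rm Lip}_{\gamma}\tilde\varphi_k\le M$ and $\tilde\varphi_k=\varphi_k$ on $W_k$. Let $E_k:=\{(\bar x,x_N):x_N<\tilde\varphi_k(\bar x)\}$ be the associated unbounded elementary domain, which satisfies \eqref{lemmacusp1} with a constant depending only on $N,\gamma,M$; since $a_{N,k}<\varphi_k<b_{N,k}$ one has $E_k\cap R_k(V_k)=R_k(\Omega_k)=R_k(\Omega)\cap R_k(V_k)$. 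Next I would transfer $g_k$ to $E_k$ by setting $h_k:=g_k$ on $R_k(\Omega_k)$ and $h_k:=0$ on $E_k\setminus R_k(V_k)$ (consistent because ${\rm supp}\,g_k\subset K_k$), and show $h_k\in\mathcal{L}^\lambda_{p,\gamma}(E_k)$ with $|h_k|_{\mathcal{L}^\lambda_{p,\gamma}(E_k)}\le C\bigl(|g_k|_{\mathcal{L}^\lambda_{p,\gamma}(R_k(\Omega))}+\|g_k\|_{L^p}\bigr)$. This runs exactly as in Lemma~\ref{compactlem}: choosing a threshold $\bar r$ of order $\delta$, every $\delta_\gamma$-ball of radius $r<\bar r$ that meets $K_k$ is contained in $R_k(V_k)$, where $E_k$ and $R_k(\Omega)$ coincide and $h_k=g_k$; balls disjoint from $K_k$ contribute zero; and balls of radius $r\ge\bar r$ are controlled by the $L^p$-norm using $|B_\gamma\cap E_k|\ge c\,\bar r^{N_\gamma}$.

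Now I would reflect and localise. Applying Theorem~\ref{estensioneholder} to $h_k$ on $E_k$ produces $\tilde h_k\in\mathcal{L}^\lambda_{p,\gamma}(\R^N)$ with $\tilde h_k|_{E_k}=h_k$ and $|\tilde h_k|_{\mathcal{L}^\lambda_{p,\gamma}(\R^N)}\le C|h_k|_{\mathcal{L}^\lambda_{p,\gamma}(E_k)}$; note $\tilde h_k\in L^p(\R^N)$ since its support is compact. The reflection can create spurious nonzero values of $\tilde h_k$ above the graph that persist on $R_k(\Omega)\setminus R_k(V_k)$, so I would cut them off: pick $\chi_k\in C_c^\infty(\R^N)$ with $\chi_k\equiv1$ on $K_k$ and ${\rm supp}\,\chi_k\subset R_k(V_k)$, and set $G_k:=\chi_k\tilde h_k$. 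Lemma~\ref{product} applied on $\R^N$ (which satisfies \eqref{inf_ball}) then gives $G_k\in\mathcal{L}^\lambda_{p,\gamma}(\R^N)$ with $\|G_k\|_{\mathcal{L}^\lambda_{p,\gamma}(\R^N)}\le C\|\tilde h_k\|_{\mathcal{L}^\lambda_{p,\gamma}(\R^N)}$; here, when $\lambda>1$, the hypothesis $\gamma\alpha\le1$ of Lemma~\ref{product} may be assumed, the complementary case forcing $g_k\equiv0$. A direct check shows $G_k|_{R_k(\Omega)}=g_k$: inside $R_k(V_k)$ one has $\tilde h_k=h_k=g_k$ and $\chi_k\equiv1$ on ${\rm supp}\,g_k$, while outside $R_k(V_k)$ both $\chi_k$ and $g_k$ vanish. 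Undoing the rotation, $F_k:=G_k\circ R_k$ satisfies $\|F_k\|_{\mathcal{L}^{\lambda,R_k}_{p,\gamma}(\R^N)}=\|G_k\|_{\mathcal{L}^\lambda_{p,\gamma}(\R^N)}$ (again by \eqref{camp_i_norm}, as $R_k(\R^N)=\R^N$) and $F_k|_\Omega=f_k$. Chaining the estimates yields $\|F_k\|_{\mathcal{L}^{\lambda,R_k}_{p,\gamma}(\R^N)}\le C\|f_k\|_{\mathcal{L}^{\lambda,R_k}_{p,\gamma}(\Omega)}$ with $C=C(N,\mathcal{A},M,p,\lambda,\gamma)$, and summation over $k$ gives \eqref{ext_elementary_hold0}.

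The main obstacle is the domain-change step producing $h_k$ on $E_k$: because ${\rm supp}\,g_k$ reaches $\partial R_k(\Omega)$, a plain extension by zero across the boundary is forbidden (cf.\ Example~\ref{log_0}), and one must instead exploit that $R_k(\Omega)$ and the completed elementary domain $E_k$ coincide on the entire box $R_k(V_k)$, together with property (A) for $E_k$, to localise the Campanato seminorm. Everything else is bookkeeping built on Theorem~\ref{estensioneholder}, Lemma~\ref{compactlem} and Lemma~\ref{product}.
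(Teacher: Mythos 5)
Your proposal is correct and follows essentially the same route as the paper's own proof: McShane-extend the profile $\varphi_k$ to obtain an unbounded elementary domain, extend $f_k$ by zero there via the argument of Lemma~\ref{compactlem}, reflect using Theorem~\ref{estensioneholder}, localise with a smooth cutoff via Lemma~\ref{product}, rotate back and sum. The only differences are cosmetic (your cutoff is supported inside $R_k(V_k)$ rather than merely vanishing above the box, and you verify $F_k|_\Omega=f_k$ summand by summand), and your explicit handling of the degenerate case $\gamma\alpha>1$ is a reasonable addition the paper leaves implicit.
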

\begin{proof}
First of all, we note that if $\Omega_k$  is a patch of $\Omega$ with $\Omega_k\cap \partial \Omega \ne\emptyset$ as in Definition~\ref{atlas}, then $\Omega_k$   can be naturally included in a domain $\widetilde{\Omega}_k$  which is isometric to an elementary  domain of class $C^{0,\gamma}$ as those discussed in the previous section. Indeed, the function $\varphi_k$ describing the boundary of $R_k(\Omega_k)$ can be extended to the whole of ${\mathbb{R}}^{N-1}$  by the  formula
\begin{equation}
\widetilde{\varphi}_k(\bar x ) = \inf_{\bar{y}\in W}\left\{\varphi_k(\bar{y})+  {\rm Lip}_{\gamma}\varphi_k \,  |\bar{x}-\bar{y}|^{\gamma}\right\},
    \end{equation} 
preserving the $C^{0,\gamma}$ seminorm  ${\rm Lip}_{\gamma}\varphi_k$ (see  e.g. \cite[Thm.~1.8.3]{bib4}). Then, we can define the domain $\widetilde{\Omega}_k$  by setting 
$$
R_k(\widetilde{\Omega}_k)=\{(\bar x, x_N):\ x_N<\widetilde{\varphi}_k(\bar x)    \}\, .
$$

Assume now that 
$f=\sum_{k=1}^{s}f_k$, $f_k   \in{\mathcal{ \widetilde L}^{\lambda,k}_{p,\gamma }}(\Omega_{k})$ as  in \eqref{patchyspace}.  Fix now $k=1, \dots , s$  such that   the patch $\Omega_k$ satisfies
$\Omega_k\cap \partial \Omega \ne\emptyset$. Since  $
 {\rm supp} f_k \subset  \overline{( V_k )_{\delta /2}}
 $, by proceeding as in the proof of Lemma~\ref{compactlem} we see
 that the function  $f_{k,0}$ obtained by extending by zero the function $f_k$ to the whole of $\widetilde{\Omega}_k$ belongs to 
 $\mathcal{L}^{\lambda}_{p,\gamma}(\widetilde{\Omega}_k)$ and  
\begin{equation}\label{ext000}
|f_{k,0}|_{\mathcal{L}^{\lambda,R_k}_{p,\gamma}(\widetilde{\Omega}_k)}\le C\|f_k  \|_{{\mathcal{L}}^{\lambda, R_k}_{p,\gamma}(\Omega_{k})}
\end{equation} 
for some positive constant $C$ independent of $f_k$  (note that the distance of the support of each function $f_k$ from the boundary of the cuboid $V_k$ is estimated from below by $\delta/2$). 
Now, 
since $R_k(\widetilde{\Omega}_k)$ is a $C^{0,\gamma}$ elementary domain as in the previous section, Theorem~\ref{estensioneholder} provides a function $\widetilde{F}_k$ defined in the whole of ${\mathbb{R}}^N $ such that $\widetilde{F}_k \circ R_k^{-1}$ belongs to $  \mathcal{L}^{\lambda}_{p,\gamma}(\R^N)$ and extends  $f_{k,0}\circ R_k^{-1}$.
Now, let's consider functions $\psi_k\in C^{\infty}_c(\R^N)$ such that  
\begin{equation}\label{psi}
   \psi_k:=\begin{cases} 1, & {\rm in}\ (R_k(V_k))_{\delta /2} \\ 0, &  {\rm in}\ S_{k},
\end{cases}
\end{equation} 
where  
$S_{W}:=\begin{matrix}\prod_{i=1}^{N-1}]a_i,b_i[\times ]b_N-\delta /2,+\infty[ \end{matrix}$.

Then, we define a function $G_k$ by setting  $G_k=(\widetilde{F}_k \circ R_k^{-1}) \psi_k$.  

 If $\Omega_k\subset \Omega$ then we set $G_k= f_{k,0}\circ   R_k^{-1}$.  

Thus by Lemma~\ref{product} the  function 
\begin{equation}\label{T_f}
F:=\sum_{k=1}^{s} G_k\circ R_k
\end{equation}
belongs to $ {\mathcal L}^{\lambda, {\mathcal{R}} }_{p, \gamma } (  \R^N)$. 
 
 We claim  that  $F_{|\Omega}=f.$  Indeed, we note that ${\rm supp} (G_{k} \circ R_k) \subset V_k$. In particular, if $x\in \Omega$ and 
 $G_{k} ( R_k (x) )\ne 0$ then $x\in \Omega_k$ hence $G_{k} ( R_k (x) )=f_k(x) $; on the other hand if  $G_{k} ( R_k (x) )=0 $ then also $f_k(x)=0$.  
 Thus we conclude that 
 \begin{equation}
 \sum_{k=1}^s G_{k} ( R_k (x) )= \sum_{k=1}^s f_k(x) = f(x)
 \end{equation}
 and the claim is proved.  
 
   Finally,  by \eqref{ext000} and Lemmas~\ref{product} and \ref{restrizioneholder}  we get
\begin{equation}\label{ineq_1}\sum_{k=1}^{s}\| G_k\circ R_k  \|_{\mathcal{L}^{\lambda,R_k}_{p,\gamma}(\R^N)}\le C\sum_{k=1}^{s}\|f_{k} \|_{\mathcal{L}^{\lambda,R_k}_{p,\gamma}(\Omega_k)}\le C\sum_{k=1}^{s}\|f_{k} \|_{\mathcal{L}^{\lambda,R_k}_{p,\gamma}(\Omega )}
\end{equation}
and the proof is complete. 
\end{proof}

Let $\Omega$ be a bounded domain of class $C^{0,\gamma}_{M}(\mathcal{A})$.  By a partition of unity associated with $\mathcal{V}$ we understand a family of $C^{\infty}$ functions  $\Psi:=\left\{\psi_k\right\}_{k=1}^{s}$ such that 
$0\le \psi_k\le 1$, supp$(\psi_k)\subset (V_k)_{\delta /2}$ for all $k=1\dots, s$, and  $\sum_{k=1}^s \psi_k=1$ in $\overline{\bigcup^{s}_{k=1}(V_k)_{\delta }}$, hence 
  $\sum_{k=1}^s \psi_k=1$ in $\Omega$. 
The existence of $\Psi$ is well-known. 
Note that  for any real-valued function $f$ defined on $\Omega$ we have 
\begin{equation}\label{decstandard}
f=\sum_{k=1}^s f \psi _k .
\end{equation}
Then we define the space 
 $\widetilde{\mathcal{L}}^{\lambda,\mathcal{R}}_{p,\gamma, \Psi}(\Omega)$  by setting
 \begin{equation}
 \widetilde{\mathcal{L}}^{\lambda,\mathcal{R}}_{p,\gamma, \Psi}(\Omega):=\left\{f\in L^{p}(\Omega):f\psi_k\in \widetilde{{\mathcal L}}^{\lambda, R_k }_{p,\gamma }(\Omega)\ \forall k= 1,\dots , s\right\}
\end{equation}
and we endow it with the norm defined by
$$
\|f \|_{   \widetilde{\mathcal{L}}^{\lambda,\mathcal{R}}_{p,\gamma, \Psi}(\Omega) }=\sum_{k=1}^s   \| f \psi_k \|_{ {\mathcal L}^{\lambda, R_k }_{p, \gamma }(\Omega) } \, .
 $$
Note that $ \widetilde{\mathcal{L}}^{\lambda,\mathcal{R}}_{p,\gamma, \Psi}(\Omega) \subset  \widetilde{\mathcal{L}}^{\lambda,\mathcal{R}}_{p,\gamma}(\Omega)$.

By  Lemma~\ref{ext_elementary_hold} we can easily deduce the following corollary.

\begin{teorema}
Let $\Omega$ be a bounded domain  of class $C^{0,\gamma}_{M}(\mathcal{A})$ where  ${\mathcal{A} }=(s,\delta , {\mathcal{V}}, {\mathcal{R}} )$ is an atlas in $\mathbb{R}^N$, $M\in [0,\infty [$. Let  $\Psi:=\left\{\psi_k\right\}_{k=1}^{s}$ be a partition of unity associated with  the family of cuboids $\mathcal{V}=\left\{V_k\right\}^{s}_{k=1}$ as above. Then there exists a linear continuous operator $T$ from 
$\widetilde{\mathcal{L}}^{\lambda,\mathcal{R}}_{p,\gamma, \Psi}(\Omega )$ to   $\mathcal{L}^{\lambda,\mathcal{R}}_{p,\gamma}(\mathbb{R}^N)$ such that 
$(Tf)(x) =f(x)$ for all $x\in \Omega$.
\end{teorema}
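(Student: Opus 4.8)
The plan is to use the canonical decomposition furnished by the fixed partition of unity $\Psi$ to build a \emph{linear} extension, and then to invoke Lemma~\ref{ext_elementary_hold} both for membership of the extension in $\mathcal{L}^{\lambda,\mathcal{R}}_{p,\gamma}(\mathbb{R}^N)$ and for the norm estimate. Concretely, given $f\in \widetilde{\mathcal{L}}^{\lambda,\mathcal{R}}_{p,\gamma,\Psi}(\Omega)$, I would set $f_k:=f\psi_k$. By the very definition of $\widetilde{\mathcal{L}}^{\lambda,\mathcal{R}}_{p,\gamma,\Psi}(\Omega)$ each $f_k$ belongs to $\widetilde{{\mathcal L}}^{\lambda,R_k}_{p,\gamma}(\Omega)$, and since $\sum_{k=1}^s\psi_k=1$ on $\Omega$ we have, by \eqref{decstandard}, the decomposition $f=\sum_{k=1}^s f_k$. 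Thus $f$ admits a distinguished decomposition of the type required by Lemma~\ref{ext_elementary_hold}, and I would apply that lemma to this specific decomposition to obtain $F=\sum_{k=1}^s F_k\in \mathcal{L}^{\lambda,\mathcal{R}}_{p,\gamma}(\mathbb{R}^N)$ with $F_{|\Omega}=f$. The operator is then defined by $Tf:=F$, and the identity $(Tf)(x)=f(x)$ for $x\in\Omega$ is exactly the extension property asserted in Lemma~\ref{ext_elementary_hold}.

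The continuity would then follow at once from the infimum definition \eqref{norma_banach} of the norm on $\mathcal{L}^{\lambda,\mathcal{R}}_{p,\gamma}(\mathbb{R}^N)$: since $Tf=\sum_k F_k$ is one admissible decomposition,
\[
\|Tf\|_{\mathcal{L}^{\lambda,\mathcal{R}}_{p,\gamma}(\mathbb{R}^N)}\le \sum_{k=1}^s \|F_k\|_{\mathcal{L}^{\lambda,R_k}_{p,\gamma}(\mathbb{R}^N)}\le C\sum_{k=1}^s \|f_k\|_{\mathcal{L}^{\lambda,R_k}_{p,\gamma}(\Omega)}=C\,\|f\|_{\widetilde{\mathcal{L}}^{\lambda,\mathcal{R}}_{p,\gamma,\Psi}(\Omega)},
\]
where the second inequality is \eqref{ext_elementary_hold0} and the final equality is the definition of the $\Psi$-dependent norm. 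Hence $T$ is bounded with operator norm at most $C$.

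The point that requires care, and is in fact the whole reason for phrasing the statement on the fixed partition $\Psi$, is the \emph{linearity} of $T$; I expect this to be the only real obstacle, all the analytic work being already absorbed into Lemma~\ref{ext_elementary_hold} and Theorem~\ref{estensioneholder}. I would verify linearity by tracking that every ingredient entering the construction of $F$ in the proof of Lemma~\ref{ext_elementary_hold} is a linear operation once the geometric data are frozen: the assignment $f\mapsto(f\psi_1,\dots,f\psi_s)$ is linear because the $\psi_k$ are fixed; the extension by zero $f_k\mapsto f_{k,0}$ onto the fixed domain $\widetilde{\Omega}_k$ is linear; the even reflection of Theorem~\ref{estensioneholder} across the fixed graph of $\widetilde{\varphi}_k$ is linear in its argument; multiplication by the fixed cutoff $\psi_k$ from \eqref{psi} and composition with the fixed isometry $R_k$ are linear; and the final sum $F=\sum_k G_k\circ R_k$ is linear. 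Consequently $f\mapsto Tf$ is linear. I would emphasise that this hinges on having a single, $f$-independent decomposition: the abstract norm \eqref{norma_banach} involves an infimum over \emph{all} decompositions and so does not by itself single out a linear choice, which is precisely why the theorem is stated on the subspace $\widetilde{\mathcal{L}}^{\lambda,\mathcal{R}}_{p,\gamma,\Psi}(\Omega)$ carrying the explicit $\Psi$-dependent norm.
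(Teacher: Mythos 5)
Your proposal is correct and is essentially the paper's own proof: both use the fixed decomposition $f_k=f\psi_k$ from \eqref{decstandard}, feed it into Lemma~\ref{ext_elementary_hold} to define $Tf=F$ as in \eqref{T_f}, bound $\|Tf\|_{\mathcal{L}^{\lambda,\mathcal{R}}_{p,\gamma}(\R^N)}$ by the particular decomposition $\sum_k F_k$ via \eqref{ext_elementary_hold0}, and conclude continuity from linearity. Your explicit tracking of linearity through each frozen ingredient of the construction is a slightly more careful rendering of what the paper dismisses as ``linear by definition,'' but it is the same argument.
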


\begin{proof} Given $f\in  \widetilde{\mathcal{L}}^{\lambda,\mathcal{R}}_{p,\gamma, \Psi}(\Omega ) $, we can write it as in \eqref{decstandard}, hence
$f=\sum_{k=1}^sf_k$ where $f_k=f\psi_k\in \widetilde{{\mathcal L}}^{\lambda, R_k }_{p,\gamma }(\Omega)$. Then the function $Tf=F$ defined as in 
\eqref{T_f} is an extension of $f$. Note that by Lemma~\ref{ext_elementary_hold}, $Tf$ can be written in the form $Tf=\sum_{k=1}^sF_k$ and estimate
\eqref{ext_elementary_hold0}  holds. By passing to the infimum in the right-hand side of \eqref{ext_elementary_hold0} with respect to all admissible decompositions of $Tf$ we conclude that 
$$
\| Tf \|_{ {\mathcal{L}}^{\lambda,\mathcal{R}}_{p,\gamma }(\R^N )}  \le C \| f\| _{   \widetilde{\mathcal{L}}^{\lambda,\mathcal{R}}_{p,\gamma, \Psi}(\Omega) }. 
$$
Since the operator $Tf$ is linear by definition, the previous inequality yields its continuity. 
\end{proof}

\begin{remark}\label{patcheq} In the Lipschitz case $\gamma =1$  the space $\widetilde{\mathcal{L}}^{\lambda,\mathcal{R}}_{p,\gamma, \Psi} (\Omega)$  is just the classical Campanato space  ${\mathcal{L}}^{\lambda}_{p}(\Omega)$ defined in \eqref{clasintro} which coincides with  ${\mathcal{L}}^{\lambda}_{p,1}(\Omega)$. Indeed, by Lemma~\ref{productbis} it follows that if $f\in {\mathcal{L}}^{\lambda}_{p,1}(\Omega) $ then $f\psi_k \in {\mathcal{L}}^{\lambda}_{p,1}(\Omega)  $ and since the support of $f\psi_k$ is contained in $\overline{ (V_j)_{\delta/2}}$ we have that $f\psi_k\in  { \widetilde{ {\mathcal  L} }}  ^{\lambda, R_k }_{p, 1 }(\Omega) $. This implies that 
$f\in \widetilde{\mathcal{L}}^{\lambda,\mathcal{R}}_{p,1, \Psi} (\Omega)$. Accordingly ${\mathcal{L}}^{\lambda}_{p,1}(\Omega)\subset 
\widetilde{\mathcal{L}}^{\lambda,\mathcal{R}}_{p,1, \Psi} (\Omega)$. The other inclusion is obvious. 
The  same arguments show also  that 
$
  \mathcal{L}^{\lambda ,\mathcal R}_{p, 1 }(\Omega)  = {\mathcal{L}}^{\lambda}_{p,1}(\Omega) 
  $
  and
  $
  \mathcal{L}^{\lambda,\mathcal{R}}_{p,1}(\mathbb{R}^N)  = \mathcal{L}^{\lambda }_{p,1}(\mathbb{R}^N)$.
\end{remark}

\vspace{12pt}

We conclude this paper  by formulating the following problem. \\

{\bf Problem:} Given a domain $\Omega$ in $\R^N$ and  $\gamma \in ]0,1]$,   let
\begin{equation} \label{bmogamma}
\bmo_{\gamma} (\Omega ) =\left\{ f\in L^1_{loc}(\Omega):\     |f|_{\bmo_{\gamma} (\Omega )} \ne \infty   \right\}
\end{equation}
where 
$$ \sup_{B_{\gamma }\subset \Omega}   \dashint_{B_{\gamma}} \left|f(y)    
-\dashint_{B_{\gamma}}   f(z)dz  \right|dy
$$ 
and the supremum is taken on all balls $B_{\gamma}$ with respect to the metric $\delta_{\gamma}$  contained in $\Omega$.  The limiting case corresponding to $\gamma =1$ gives the classical  space $\bmo (\Omega)$ defined in Section~\ref{bmosec}.  Note that in general the space 
$  \bmo_{\gamma} (\Omega ) $ is larger than the Campanato space $\mathcal {L}^{1 }_{1,\gamma} (\Omega)  $, see Example~\ref{strip}.  

For any $\gamma \in ]0,1[ $, characterise the open sets  $\Omega$ such that the   functions of the space $\bmo_{\gamma} (\Omega )$ can be extended to the whole of $\R^N$ by functions belonging to $\bmo_{\gamma} (\R^N )$.  This problem was solved for $\gamma =1$ in \cite{jones}. \\

{\bf Acknowledgments}  The authors are  thankful to Professor Andrea Cianchi for bringing to their attention item \cite{gallia} in the reference list and to  Professor Massimo Lanza de Cristoforis for pointing out \cite[Lemma~11]{bolasi}.  The authors are also thankful to Professors Victor Burenkov and Vincenzo Vespri for useful discussions.  P.D. Lamberti  is a  member of the Gruppo Nazionale per l'Analisi Matematica, la Probabilit\`a e le loro Applicazioni (GNAMPA) of the Istituto Nazionale di Alta Matematica (INdAM).

\clearpage

\end{document}